\def\cyr{%
\renewcommand\rmdefault{wncyr}%
\renewcommand\sfdefault{wncyss}%
\renewcommand\encodingdefault{OT2}%
\normalfont
\selectfont}
\numberwithin{equation}{section}
\DeclareMathAlphabet{\zap}{OT1}{pzc}{m}{it}
\DeclareTextFontCommand{\textcyr}{\cyr}
\def\be{\begin{equation}}
\def\ee{\end{equation}}
\def\bea{\begin{eqnarray*}}
\def\eea{\end{eqnarray*}}
\newcommand{\rad}{\text{\cyr   ya}}
\newcommand{\radius}{\mathsf{r}}
\def\CC{\mathbb C}
\def\RP{\mathbb{RP}}
\def\QQ{\mathbb Q}
\def\vol{\mbox{Vol}}
\newtheorem{main}{Theorem}
\DeclareMathOperator{\dft}{def}
\DeclareMathOperator{\ind}{ind}
\DeclareMathOperator{\Int}{Int }
\DeclareMathOperator{\sech}{sech }
\DeclareMathOperator{\tr}{tr}
\newtheorem{thm}{Theorem}[section]
\newtheorem{lem}{Lemma}[section]
\newtheorem{prop}{Proposition}[section]
\newtheorem{cor}[thm]{Corollary}
\newenvironment{proof}{\medskip \noindent
{\bf Proof.}}{\hfill \raisebox{-.2em}{\rule{.7em}{.8em}}
\\}
\def\ZZ{{\mathbb Z}}
\def\RR{{\mathbb R}}
\def\CP{{\mathbb C \mathbb P}}
\begin{document}

\title{Curvature, Cones, and Characteristic Numbers} 

\author{Michael Atiyah\thanks{Research supported in part by 
the Simons Center for Geometry and Physics.}  
   ~and Claude LeBrun\thanks{Research supported 
in part by  NSF grant DMS-0905159.}}

\date{}
\maketitle

 \begin{abstract}	
 We study Einstein metrics on  smooth compact $4$-manifolds
with an  edge-cone singularity of specified cone angle along an embedded
 $2$-manifold. To do so, we first  derive modified versions of the Gauss-Bonnet and signature
 theorems for arbitrary 
 Riemannian $4$-manifolds with edge-cone singularities, and then 
  show that these yield non-trivial obstructions in the 
Einstein case.
We then use these integral formul{\ae}  to 
 obtain interesting  information regarding 
 gravitational instantons which arise as limits of such edge-cone manifolds. 
\end{abstract}

\section{Introduction}

Recall \cite{bes} that a Riemannian manifold $(M,g)$ is said to be 
{\em Einstein}  if it has constant Ricci curvature; this is equivalent to 
requiring that the Ricci tensor $r$ of $g$ satisfy 
$$r=\lambda g$$
for some real number $\lambda$, called the Einstein constant of $g$. 
While one typically requires $g$ to be a smooth metric on $M$, 
it is sometimes interesting to consider generalizations where $g$
is allowed to have mild singularities. In the K\"ahler case, beautiful results \cite{brendedge,donedge,jmredge} 
have recently been 
obtained  regarding the situation in which   
 $g$ has specific  conical singularities along a submanifold of real codimension 
$2$. Einstein manifolds with such {\em edge-cone singularities}  are the main focus of 
the present article.

Let $M$ be a smooth $n$-manifold,  and let $\Sigma \subset M$ be a smoothly
embedded $(n-2)$-manifold. Near any point $p\in \Sigma$, we can thus
find local coordinates $(y^1,y^2, x^1, \ldots, x^{n-2})$ in which $\Sigma$
is given by $y^1=y^2=0$. Given any such adapted 
coordinate system, we then introduce an associated  
{\em transversal polar coordinate}  system  $(\rho,\theta, x^1, \ldots, x^{n-2})$
 by setting  $y^1=\rho \cos \theta$ and  $y^2= \rho \sin \theta$.
We define   a Riemannian edge-cone metric $g$ of cone angle
$2\pi\beta$ on $(M,\Sigma)$  to  be a smooth Riemannian metric 
 on $M-\Sigma$ which, for some $\varepsilon > 0$,  can be expressed as 
 \begin{equation}
\label{edgesum}
 g=\bar{g}+\rho^{1+\varepsilon}h
\end{equation}
 near any point of $\Sigma$, 
 where  the  symmetric tensor field $h$ on $M$ has infinite conormal regularity 
along $\Sigma$, and where 
 \begin{equation}
\label{edgedef}
\bar{g}= d\rho^2 + \beta^2 \rho^2 (d\theta+ u_jdx^j)^2
+ w_{jk} dx^jdx^k
\end{equation}
in  suitable transversal polar coordinate  systems; here  $w_{jk}(x) dx^jdx^k$
and  $u_j(x)dx^j$ are  a smooth metric and a smooth $1$-form on $\Sigma$.
(Our conormal regularity hypothesis means that 
  the components 
of $h$ in $(y, x)$ coordinates have infinitely many continuous derivatives with respect to 
$\partial/\partial x^j$, $\partial/\partial \theta$, 
and  $\rho\, \partial/\partial \rho$.)
Thus, an edge-cone metric $g$ behaves like
a smooth metric in directions parallel to $\Sigma$, but is modelled on 
a $2$-dimensional cone
\begin{center}
\mbox{
\beginpicture
\setplotarea x from 0 to 270, y from 55 to 155
\circulararc 240 degrees from 30 145 center at 30 100 
\circulararc 240 degrees from 30 110 center at 30 100 
\ellipticalarc axes ratio 4:1 180 degrees from 175 85
center at 220 85
{\setlinear 
\plot  30 100 30 145 /
\plot  30 100 69 78  /
\plot  220 145 175 85   /
\plot  220 145 265 85   /
}
\put {$2\pi \beta$} [B1] at 5 85
\put {(identify)} [B1] at 83 110
\arrow <3pt> [1,3] from 26 120  to 26 125
\arrow <3pt> [1,3] from 47 88  to 51  86
\setdashes 
\arrow <3pt> [1,3] from 130 110  to 160 110
\circulararc -120 degrees from 30 130 center at 30 100
\ellipticalarc axes ratio 4:1 -180 degrees from 172 85
center at 216 85
\endpicture
}
\end{center}
in the transverse directions.

If an edge-cone  metric
on $(M,\Sigma)$ 
 is 
Einstein on $M-\Sigma$, we will call it  an Einstein edge-cone metric. 
For example, when $\beta < 1/3$, 
any 
K\"ahler-Einstein edge metric of cone angle $2\pi \beta$, in the sense of 
\cite{brendedge,donedge,jmredge}, can be shown 
   \cite[Proposition 4.3]{jmredge}  to be  an Einstein edge-cone metric. 
Another   
 interesting class, with $\beta=1/p$ for some  integer
$p \geq 2$,  is obtained by taking quotients of non-singular Einstein manifolds
by   cyclic groups of isometries for which the fixed-point set is purely of codimension $2$. 
Some explicit 
 Einstein edge-cone  metrics of much larger cone angle are also 
 known, as we will see   in \S \ref{zoo} below.  
Because of the wealth of  examples produced by such constructions, 
it   seems entirely reasonable to 
limit the present investigation  to   edge-cone metrics, as defined above. 
However, we do so purely 
as a matter of exigency. For example,  it    is remains  unknown whether there exist  
Einstein metrics with analogous  singularities for which the  cone angle varies along $\Sigma$. 
This is an issue  which  clearly merits thorough exploration. 

The Hitchin-Thorpe inequality \cite{bes,hit,tho} provides an important obstruction to the 
existence of Einstein metrics on $4$-manifolds. 
If $M$ is a smooth compact oriented $4$-manifold which admits 
a smooth  Einstein metric $g$, then the Euler characteristic $\chi$ and signature
$\tau$ of $M$ must satisfy the two inequalities 
$$
(2\chi \pm 3\tau )(M) \geq 0
$$
because both expressions are represented by Gauss-Bonnet-type integrals
where the integrands become non-negative in the Einstein case.
Note that this inequality hinges on several peculiar features of $4$-dimensional Riemannian geometry, and that 
no  analogous obstruction to the existence of Einstein metrics is currently  known in 
any dimension $\geq 4$.

In light of the current interest in Einstein metrics with edge-cone singularities, 
we believe it is interesting and natural to look for obstructions to their existence
which generalize our  understanding of the smooth case. Our main objective
here will be to prove the following version of the Hitchin-Thorpe inequality for edge-cone metrics: 

\begin{main}\label{oomph} 
 Let $(M,\Sigma)$ be a pair consisting of a smooth compact $4$-manifold and
a fixed smoothly embedded compact oriented surface. If $(M,\Sigma)$ admits
an Einstein edge-cone metric with cone angle $2\pi \beta$ along $\Sigma$,
then $(M,\Sigma )$ must satisfy the two  inequalities 
$$(2\chi \pm 3\tau) (M) \geq (1-\beta ) \left( 2\chi (\Sigma)  \pm (1+\beta ) [\Sigma]^2 \right)~.$$
\end{main}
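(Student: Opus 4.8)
The plan is to establish edge-cone versions of the two classical Gauss–Bonnet and signature Gauss-Bonnet-type integral formulae, and then apply the Einstein pointwise positivity that underlies the smooth Hitchin–Thorpe inequality. In the smooth case one has $2\chi\pm 3\tau = \frac{1}{4\pi^2}\int_M\left(\text{(curvature expression)}\right)\,d\mu$, where the integrand is manifestly nonnegative when $r=\lambda g$ because the traceless Ricci term drops out and one is left with the squared norms of the self-dual (resp. anti-self-dual) Weyl curvature together with the scalar curvature squared. So first I would derive modified Gauss–Bonnet and signature theorems for $4$-manifolds with an edge-cone singularity of cone angle $2\pi\beta$ along $\Sigma$, each carrying a correction term supported on $\Sigma$.

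\medskip

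\noindent\textbf{Step 1: the corrected characteristic-number formulae.} I would compute the deficit produced by the conical singularity by excising a tubular neighborhood $N_\delta(\Sigma)$ of radius $\delta$, applying the smooth Gauss–Bonnet and signature theorems with boundary (hence with the boundary transgression/$\eta$-type terms), and letting $\delta\to 0$. The decomposition \eqref{edgesum} of $g$ as $\bar g + \rho^{1+\varepsilon}h$ with the stated conormal regularity is exactly what guarantees that the perturbation $\rho^{1+\varepsilon}h$ contributes nothing in the limit, so the boundary contributions are governed entirely by the model metric $\bar g$ of \eqref{edgedef}. For the Euler characteristic the model cone of angle $2\pi\beta$ contributes an angle deficit proportional to $(1-\beta)$ integrated over $\Sigma$, yielding a term $(1-\beta)\chi(\Sigma)$; for the signature the corresponding boundary term produces $(1-\beta^2)[\Sigma]^2$, reflecting the self-intersection of $\Sigma$. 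Concretely, I expect formulae of the shape
$$
\chi(M)=\frac{1}{8\pi^2}\int_M\left(|W|^2-\tfrac12|\mathring r|^2+\tfrac{1}{24}s^2\right)d\mu+(1-\beta)\,\chi(\Sigma),
$$
$$
\tau(M)=\frac{1}{12\pi^2}\int_M\left(|W_+|^2-|W_-|^2\right)d\mu+\tfrac13(1-\beta^2)\,[\Sigma]^2,
$$
where $\mathring r$ is the traceless Ricci tensor and $s$ the scalar curvature, and the integrals are the convergent ones over $M-\Sigma$.

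\medskip

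\noindent\textbf{Step 2: assembling the inequality.} With these in hand I would form $(2\chi\pm 3\tau)(M)$. The curvature integrands combine exactly as in the smooth case into
$$
\frac{1}{4\pi^2}\int_{M-\Sigma}\left(2|W_\pm|^2-\tfrac12|\mathring r|^2+\tfrac{1}{24}s^2\right)d\mu,
$$
and in the Einstein case $\mathring r\equiv 0$, so the integrand is nonnegative and the whole integral is $\geq 0$. Meanwhile the boundary corrections assemble into $(1-\beta)\bigl(2\chi(\Sigma)\pm(1+\beta)[\Sigma]^2\bigr)$, using $3\cdot\tfrac13(1-\beta^2)=(1-\beta)(1+\beta)$. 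Dropping the nonnegative bulk integral then yields the claimed inequality $(2\chi\pm 3\tau)(M)\geq(1-\beta)\bigl(2\chi(\Sigma)\pm(1+\beta)[\Sigma]^2\bigr)$.

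\medskip

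\noindent\textbf{Main obstacle.} The delicate part is Step 1: rigorously justifying the limit $\delta\to 0$ and pinning down the exact boundary constants. One must show that the curvature of $g$ is integrable on $M-\Sigma$ despite the singularity, that the boundary transgression integrals over $\partial N_\delta(\Sigma)$ converge to the asserted multiples of $\chi(\Sigma)$ and $[\Sigma]^2$, and that the conormal error term $\rho^{1+\varepsilon}h$ genuinely drops out rather than contributing an anomalous correction. Here the $\varepsilon>0$ in \eqref{edgesum} and the presence of the connection $1$-form $u_j\,dx^j$ in $\bar g$ (which couples the cone to the normal bundle of $\Sigma$ and is what ultimately produces the $[\Sigma]^2$ term in the signature) both require careful bookkeeping; I would isolate the flat-cone model computation first and then argue that lower-order terms are negligible.
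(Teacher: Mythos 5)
Your Step 2 is exactly the argument the paper uses in \S\ref{hti}: take the linear combinations of \eqref{prop1} and \eqref{prop2} giving $(2\chi\pm 3\tau)(M)$, observe that in the Einstein case $\mathring r\equiv 0$ kills the only negative term in the integrand $\frac{s^2}{24}+2|W_\pm|^2-\frac{|\mathring r|^2}{2}$, and drop the nonnegative integral. The formulae you posit in Step 1 are also precisely Theorems \ref{thing1} and \ref{thing2}. Where you diverge is in how those formulae are established. Your route --- excise a tube $N_\delta(\Sigma)$, apply the Gauss--Bonnet and signature theorems with boundary of \cite{aps}, and let $\delta\to 0$ --- is one the authors explicitly mention as ``another workable strategy, which we leave to the interested reader,'' but deliberately do not carry out. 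Instead they argue in two steps: Lemma \ref{pram} shows by a transgression argument (writing the variation of the Pfaffian and $p_1$ forms as exact forms whose primitives are controlled by $|\mathcal R|\,|\nabla\dot g|\lesssim\rho^{-1+\varepsilon}$) that the curvature integrals are the same for all edge-cone metrics of a given cone angle; Proposition \ref{stroller} then builds one special metric that is K\"ahler near $\Sigma$ and agrees with a smooth K\"ahler metric outside a tube, so that the difference of integrals collapses to $\int(\varrho^2-\varrho_0^2)$ and is evaluated by Stokes' theorem using only Chern--Weil data of $T^{1,0}\Sigma$ and the normal bundle. (A third proof, via the $\mathbf{G}$-index theorem for orbifold quotients plus rational approximation of $\beta$, appears in \S\ref{argument}.) The trade-off: your APS route is conceptually direct but forces you to compute the limiting transgression and $\eta$-type boundary contributions of a genuinely singular metric on the circle bundles $\partial N_\delta$, including verifying that the connection term $u_j\,dx^j$ produces exactly $\frac13(1-\beta^2)[\Sigma]^2$ and that the $\rho^{1+\varepsilon}h$ perturbation is harmless --- all of which you correctly flag as the hard part but do not execute; the paper's route front-loads a one-time regularity estimate and then reduces the entire constant-chasing to an essentially algebraic computation with Ricci forms of a rotationally symmetric K\"ahler model. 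Your outline is sound and would yield the theorem once Step 1 is carried out in full, but as written the decisive computation is still a promissory note.
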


To show this, we will   cast our net a good deal wider. In 
\S\S \ref{curvy}--\ref{argument},  we prove edge-cone generalizations 
of the $4$-dimensional Gauss-Bonnet and signature formul{\ae};  these results,
Theorems \ref{thing1} and \ref{thing2},  
 do not involve the Einstein condition, 
but rather apply to    arbitrary edge-cone
metrics on  compact $4$-manifolds.  We then zero in  on the Einstein case
in \S \ref{hti},  proving Theorem \ref{oomph} 
and  exploring some of  its implications. 
 But Theorems \ref{thing1} and \ref{thing2} have broader ramifications. 
 In \S  \ref{zoo}, 
we  apply  them to the   study of some explicit self-dual edge-cone  metrics, and explore 
the remarkable way  that  certain gravitational 
instantons arise  as limits of  edge-cone manifolds as  $\beta \to 0$. 

\pagebreak

\section{Curvature Integrals and Topology}
\label{curvy}

The Euler characteristic $\chi$ and signature $\tau$ of a smooth compact $4$-manifold $M$ 
 may both be calculated by choosing any smooth Riemannian metric $g$ on $M$,
 and then integrating appropriate universal quadratic polynomials
 in the curvature of $g$. When $g$ has an edge-cone singularity, however, 
 correction terms must be introduced in order to compensate for the singularity 
 of the metric along the given surface $\Sigma\subset M$. 
 
 \begin{thm}\label{thing1}
 Let $M$ be a smooth compact  oriented $4$-manifold, and 
 let $\Sigma \subset M$ be a smooth compact oriented embedded  surface. Then,
 for any edge-cone metric $g$ on $(M,\Sigma)$ with cone angle $2\pi \beta$, 
 \begin{equation}
  \label{prop1}
 \chi (M)  - (1-\beta) \chi (\Sigma ) = 
 \frac{1}{8\pi^2}\int_M \left(
 \frac{s^2}{24} + |W|^2 -\frac{|\mathring{r}|^2}{2}
 \right) d\mu ~.
\end{equation}
 \end{thm}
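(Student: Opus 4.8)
The plan is to recognize the right-hand integrand as the Chern--Gauss--Bonnet (Pfaffian) form of $g$: for a smooth compact oriented Riemannian $4$-manifold the quantity $\frac{1}{8\pi^2}\left(\frac{s^2}{24}+|W|^2-\frac{|\mathring r|^2}{2}\right)d\mu$ is precisely the Euler form $\mathrm{Pf}(R)$, and integrates to $\chi$. The formula to be proved therefore asserts that the edge-cone singularity contributes an extra, concentrated amount $(1-\beta)\chi(\Sigma)$ to the total curvature, over and above the integral of the smooth Euler form on $M-\Sigma$. To isolate this contribution I would excise a tubular neighborhood $N_\delta=\{\rho\le\delta\}$ of $\Sigma$ and work on $M_\delta=M-N_\delta$, on which $g$ is an honest smooth metric with boundary $\partial M_\delta$ a circle bundle over $\Sigma$ of fiber length $\approx 2\pi\beta\delta$. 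Applying the Chern--Gauss--Bonnet theorem with boundary gives
$$\chi(M_\delta)=\frac{1}{8\pi^2}\int_{M_\delta}\left(\frac{s^2}{24}+|W|^2-\frac{|\mathring r|^2}{2}\right)d\mu+\int_{\partial M_\delta}\mathrm{TPf},$$
where $\mathrm{TPf}$ is the usual transgression (boundary) term, a universal polynomial in the intrinsic curvature and second fundamental form of $\partial M_\delta$. The whole proof then reduces to computing the two limits as $\delta\to 0$.

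The topological and bulk terms are the easy part. By Mayer--Vietoris applied to $M=M_\delta\cup N_\delta$, together with the facts that the disk bundle $N_\delta$ retracts to $\Sigma$ (so $\chi(N_\delta)=\chi(\Sigma)$) and that the separating circle bundle has vanishing Euler characteristic, one gets $\chi(M_\delta)=\chi(M)-\chi(\Sigma)$ for all small $\delta$. For the bulk integral I would check that the Euler integrand is absolutely integrable across $\Sigma$, so that $\int_{M_\delta}\to\int_M$; this is where the hypotheses enter decisively. Writing $g=\bar g+\rho^{1+\varepsilon}h$ and using the conormal regularity of $h$, the most singular curvature components are at worst $O(\rho^{-1+\varepsilon})$ near $\Sigma$, so their squares are integrable against the volume element $d\mu\sim\rho\,d\rho\,d\theta\,dx$ precisely because $\varepsilon>0$. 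Thus the bulk term converges to $\frac{1}{8\pi^2}\int_M(\cdots)\,d\mu$.

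The crux is the limit of the boundary transgression integral, which I claim tends to $-\beta\,\chi(\Sigma)$. Here the strategy is to show first that, to leading order in $\delta$, only the model metric $\bar g$ matters: the perturbation $\rho^{1+\varepsilon}h$ and its derivatives contribute terms that vanish as $\delta\to 0$, again by conormal regularity and $\varepsilon>0$. For $\bar g$ the geometry near $\Sigma$ is a warped product of the flat transverse $2$-cone with $(\Sigma,w)$, and the dominant piece of $\mathrm{TPf}$ on $\partial N_\delta$ is the one pairing the single large fiber principal curvature (of size $\approx 1/\delta$ in the shrinking $\theta$-circle) with the Euler form of $(\Sigma,w)$. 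This is exactly the $4$-dimensional shadow of the elementary $2$-dimensional computation: for the truncated flat cone, the circle $\rho=\delta$ has length $2\pi\beta\delta$ and geodesic curvature $1/\delta$, so its boundary term, taken with the orientation induced on the complement of the cone point, equals $-\beta$. Multiplying by the $\Sigma$-factor, whose Euler form integrates to $\chi(\Sigma)$, yields the claimed limit $-\beta\chi(\Sigma)$; the subleading terms (including those produced by the twisting $1$-form $u_j$) carry extra powers of $\delta$ and drop out.

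Assembling the three limits, $\chi(M)-\chi(\Sigma)=\frac{1}{8\pi^2}\int_M(\cdots)\,d\mu-\beta\chi(\Sigma)$, which is the asserted identity. The single genuinely delicate step is the boundary computation of the previous paragraph: one must verify both that the transgression integrand localizes to the model cone $\times\,\Sigma$ product and that the perturbation and twisting terms are uniformly negligible as the tube radius shrinks. I would expect that the cleanest route is to set up the transgression form in the adapted frame $(\partial_\rho,\ \rho^{-1}\partial_\theta,\ \partial_{x^j})$ and track powers of $\rho$ explicitly; an attractive alternative that avoids boundary terms entirely is to round off the cone to a family of genuinely smooth metrics $g_\epsilon$, apply the ordinary Gauss--Bonnet theorem to each, and show that the transverse curvature concentrates to the $2$-dimensional cone defect $2\pi(1-\beta)$ --- giving the same correction $(1-\beta)\chi(\Sigma)$ by the product structure.
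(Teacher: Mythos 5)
Your proposal is correct in outline, but it is not the route the paper takes: the authors explicitly mention your strategy --- applying the Gauss--Bonnet theorem with boundary to the complement of a tube around $\Sigma$ and letting the radius shrink --- as ``another workable strategy, which we leave to the interested reader,'' and then do something different. The paper first proves a deformation lemma (Lemma \ref{pram}): for two edge-cone metrics of the same cone angle, the $t$-derivative of the Pfaffian form along the linear path is exact, $\dot\Phi=d\phi_t$ with $|\phi_t|\leq C\rho^{-1+\varepsilon}$, so Stokes' theorem on $M_\delta$ shows the curvature integral is independent of the metric. This reduces everything to a single well-chosen metric, which is then built (Proposition \ref{stroller}) to be K\"ahler near $\Sigma$ alongside a smooth K\"ahler comparison metric $g_0$; the integrand becomes $\varrho\wedge\varrho$, and the defect is computed by Stokes' theorem applied to $\varrho-\varrho_0=d\varphi$ together with Chern--Weil identities for $c_1(T^{1,0}\Sigma)$ and $c_1(E)$. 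The trade-off is this: your route is more direct and avoids constructing the K\"ahler model, but it forces you to control the full transgression form for an \emph{arbitrary} edge-cone metric, where the $\langle \mathrm{II},\hat{\mathcal R}\rangle$ term pairs a principal curvature of size $1/\delta$ against ambient curvature components that can blow up like $\delta^{-1+\varepsilon}$; a crude norm bound gives $\delta^{-1+\varepsilon}$ after integration and one must check componentwise that the divergent pairings cancel (only the bounded $\Sigma$-sectional-curvature component of $\hat{\mathcal R}$ meets the $1/\delta$ eigenvalue). You correctly flag this as the delicate step, and your instinct to first reduce to the model $\bar g$ is exactly what is needed --- but making that reduction rigorous amounts to proving something equivalent to the paper's Lemma \ref{pram}, after which your clean $2$-dimensional cone-defect calculation (boundary term $-\beta$ times $\chi(\Sigma)$ from the product structure) does finish the job. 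The paper's K\"ahler detour buys an explicit, estimate-free evaluation of the defect via characteristic classes; yours buys conceptual transparency at the cost of the boundary asymptotics.
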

 
 \begin{thm}\label{thing2}
Let $M$ be a smooth compact  oriented $4$-manifold, and 
 let $\Sigma \subset M$ be a smooth compact oriented embedded  surface. 
  Then,
 for any edge-cone metric $g$ on $(M,\Sigma)$ with cone angle $2\pi \beta$, 
 \begin{equation}
  \label{prop2}
 \tau (M)  - \frac{1}{3} (1-\beta^2) [\Sigma ]^2 = 
 \frac{1}{12\pi^2}\int_M \left(
  |W_+|^2 - |W_-|^2
 \right) d\mu ~.
\end{equation}
 \end{thm}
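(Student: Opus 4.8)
The plan is to prove this companion formula by the same excision-and-transgression strategy that drives Theorem~\ref{thing1}. By Chern--Weil theory the integrand is, up to normalization, the first Pontryagin form $p_1(g)$ of the edge-cone metric, normalized so that $\tfrac{1}{12\pi^2}(|W_+|^2-|W_-|^2)\,d\mu=\tfrac13 p_1(g)$; the content of the theorem is that the improper integral of this form over $M\smallsetminus\Sigma$ differs from $\tau(M)=\tfrac13\int_M p_1(g_0)$ (Hirzebruch, applied to any genuinely smooth reference metric $g_0$ on the closed manifold $M$) by a defect concentrated along $\Sigma$. First I would fix such a smooth background $g_0$ and excise the radius-$\epsilon$ tube $N_\epsilon=\{\rho\le\epsilon\}$ around $\Sigma$. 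On $M\smallsetminus\Sigma$ both Levi-Civita connections are defined, so $p_1(g)-p_1(g_0)=d\,\mathrm{TP}_1(g,g_0)$ for the Chern--Simons transgression $3$-form $\mathrm{TP}_1$, and Stokes' theorem on $M\smallsetminus N_\epsilon$ gives
\[
\int_{M\smallsetminus N_\epsilon}\big(p_1(g)-p_1(g_0)\big)=-\int_{\partial N_\epsilon}\mathrm{TP}_1(g,g_0).
\]
Letting $\epsilon\to 0$, using $\int_{N_\epsilon}p_1(g_0)\to 0$ and the convergence of $\int_{M\smallsetminus\Sigma}p_1(g)$ (the transverse cone is flat away from its tip, so $|W|^2$ is integrable near $\Sigma$), this rearranges to
\[
\tfrac13\int_{M\smallsetminus\Sigma}p_1(g)=\tau(M)-\tfrac13\lim_{\epsilon\to 0}\int_{\partial N_\epsilon}\mathrm{TP}_1(g,g_0),
\]
so everything reduces to evaluating the boundary limit. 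Note that $\tau(M)$ enters only through Hirzebruch's theorem for the \emph{smooth} metric $g_0$ on closed $M$, which sidesteps any question about the signature of the cut-open manifold.

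Second, I would compute this limit from the model metric $\bar g$ of \eqref{edgedef}. Since $\mathrm{TP}_1$ is local and polynomial in the connection coefficients, replacing $g$ by $\bar g$ alters $\int_{\partial N_\epsilon}\mathrm{TP}_1$ by a quantity controlled by the remainder $\rho^{1+\varepsilon}h$; the conormal regularity hypothesis on $h$ together with the extra power $\rho^{1+\varepsilon}$ should force this discrepancy to vanish as $\epsilon\to 0$. Likewise, two smooth references $g_0$ change $\mathrm{TP}_1$ by a transgression form extending smoothly across $\Sigma$, whose boundary integral tends to $0$; hence the limit is independent of $g_0$ and may be computed with the convenient smooth model obtained by setting $\beta=1$, namely $d\rho^2+\rho^2(d\theta+u_jdx^j)^2+w_{jk}dx^jdx^k$. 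On $\partial N_\epsilon$, which is the normal circle bundle of $\Sigma$ equipped with the connection $1$-form $d\theta+u_jdx^j$, integrating out the fiber $\theta\in[0,2\pi)$ reduces $\int_{\partial N_\epsilon}\mathrm{TP}_1(\bar g,g_0)$ to an integral over $\Sigma$ whose $\epsilon\to 0$ limit is a universal function of $\beta$ times the curvature integral $\int_\Sigma du=2\pi[\Sigma]^2$, since $\tfrac{1}{2\pi}\int_\Sigma du$ is the Euler number of the normal bundle. The explicit Chern--Simons evaluation then identifies this function, so that the boundary limit equals $(1-\beta^2)[\Sigma]^2$ and produces the asserted correction term $\tfrac13(1-\beta^2)[\Sigma]^2$.

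I expect the boundary computation to be the main obstacle, on two counts. Analytically, one must justify rigorously that only the conical model $\bar g$ survives in the $\epsilon\to 0$ limit, i.e. that the conormally regular perturbation $\rho^{1+\varepsilon}h$ and the off-diagonal connection terms contribute nothing; this is precisely where the regularity built into \eqref{edgesum} is indispensable, and it is the step most likely to conceal subtleties. Algebraically, pinning down the exact coefficient $(1-\beta^2)$ — as opposed to merely exhibiting the $[\Sigma]^2$ dependence — requires carefully tracking the matrix-valued interpolation in the Chern--Simons form and carrying out the fiber integration. As an independent check, one can test the formula in the cyclic-quotient case $\beta=1/p$, where the left-hand side is governed by the $G$-signature theorem and the finite sum of cotangent contributions of the fixed surface $\Sigma$ should reproduce exactly $\tfrac13(1-p^{-2})[\Sigma]^2$.
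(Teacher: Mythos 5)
Your strategy is essentially the one the paper explicitly mentions and then sets aside (``apply the Gauss--Bonnet and signature theorems with boundary to the complement of a tubular neighborhood of $\Sigma$, and then take limits as the radius of the tube tends to zero''), recast in Chern--Simons language, and your framing --- everything reduces to the $\epsilon\to 0$ limit of $\int_{\partial N_\epsilon}\mathrm{TP}_1(g,g_0)$ --- is correct. The paper's actual proof is organized differently: a deformation-invariance lemma (Lemma \ref{pram}) first shows the integrals of the Gauss--Bonnet and signature forms agree for any two edge-cone metrics of the same cone angle, so it suffices to verify the identity for one convenient metric per $(M,\Sigma,\beta)$; that metric is then built to be K\"ahler near $\Sigma$ and to agree with a smooth K\"ahler reference outside a tube, whereupon the relevant integrand collapses to $\varrho\wedge\varrho$ and the transgression becomes the completely explicit $1$-form $\varphi=i\partial\log(V/V_0)$, whose restriction to the level set $S_\epsilon$ is $[(1-\beta)+O(\epsilon^{\min(1,\beta)})]\,d\theta$. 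That explicitness is what lets the coefficient $(1-\beta)(1+\beta)$ emerge from an honest calculation.

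As written, however, your proposal has a genuine gap exactly where you predict one: the entire content of Theorem \ref{thing2} is the value of the boundary limit, and you assert rather than derive that it equals $(1-\beta^2)[\Sigma]^2$. Two points in particular are not established. First, the claim that after fiber integration the limit is a multiple of $\int_\Sigma du$ alone: a priori the transgression of $p_1$ over the normal circle bundle could also contribute a term proportional to $\chi(\Sigma)$ --- such a term is precisely what appears in the Gauss--Bonnet defect of Theorem \ref{thing1} --- and showing that it cancels for the signature form is part of the theorem, not an input to it. Second, the difference of Levi-Civita connection forms between the cone metric and a smooth reference is not small near $\Sigma$: in the flat transverse model it is $(1-\beta)\,d\theta$, of pointwise norm comparable to $\rho^{-1}$, while the curvature of a general edge-cone metric blows up like $\rho^{-1+\varepsilon}$; so the assertion that the conormal remainder $\rho^{1+\varepsilon}h$ and the choice of $g_0$ do not affect the limit, while true, requires the same bookkeeping carried out in Lemma \ref{pram} and cannot simply be waved through. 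Finally, your proposed consistency check at $\beta=1/p$ via the $G$-signature theorem is the paper's second proof in \S\ref{argument}; note, though, that values at $\beta=1/p$ alone do not determine the answer for all real $\beta$ without the additional cobordism-and-continuity argument given there.
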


 \noindent 
 Here $s$, $\mathring{r}$, and $W$ are the  scalar curvature, trace-free Ricci tensor, 
 and Weyl curvature of $g$, while  $W_+$ and $W_-$ are the self-dual and 
 anti-self-dual parts of $W$, 
 and  $d\mu$ is the metric 
 volume $4$-form. We follow standard conventions \cite{bes} by defining 
 $$|\mathring{r}|^2 := \mathring{r}_{jk}\mathring{r}^{jk}~, \hspace{.5in} |W|^2 := 
 \frac{1}{4} W_{jk\ell m}W^{jk\ell m}$$
 where the factor of $1/4$ arises from treating $W$ as  an element of 
 $\Lambda^2 \otimes \Lambda^2$; the point-wise norms of $W_\pm$ are defined 
 analogously, so that 
 $$|W|^2 = |W_+|^2+ |W_-|^2~.$$
 The expression $[\Sigma ]^2$ 
 denotes the self-intersection of the homology class of $\Sigma$ in 
 $H_2 (M)\cong H^2 (M)$, and coincides with the Euler class  of the normal bundle of
 $\Sigma$, paired with  the fundamental cycle of the surface. 
 
  We first discovered Theorems \ref{thing1} and \ref{thing2} 
 in the context of global-quotient orbifolds,  and we outline a method for  deducing the general case
from this special one  in \S \ref{argument}   below. 
 Another workable strategy, which we leave to the interested reader, 
   would   be  to apply the Gauss-Bonnet and
 signature theorems with boundary \cite{aps} to the complement of a tubular neighborhood 
 of $\Sigma\in M$, and then take  limits as the  radius of the tube tends to zero. 
However, we will instead begin here  by    giving a  complete and 
self-contained   proof by  yet a third
method,  of a purely differential-geometric  flavor.

For this purpose, observe that, by taking linear combinations,
    equations 
 (\ref{prop1}) and (\ref{prop2}) are equivalent to the pair of equations 
 \begin{eqnarray}
(2\chi + 3\tau) (M) -  \dft_+(\Sigma, \beta) 
&=&  \frac{1}{4\pi^2}\int_M \left(
 \frac{s^2}{24} + 2|W_+|^2  -\frac{|\mathring{r}|^2}{2}
 \right) d\mu ~~~\label{equiv1}\\
 (2\chi - 3\tau) (M) - \dft_-(\Sigma, \beta) 
&=&  \frac{1}{4\pi^2}\int_M \left(
 \frac{s^2}{24} + 2|W_-|^2  -\frac{|\mathring{r}|^2}{2}
 \right) d\mu ~~~\label{equiv2}
\end{eqnarray}
provided we define the {\em defects} $\dft_\pm(M,\Sigma, \beta)$ to be 
\begin{equation}
\label{defect}
\dft_\pm (\Sigma,\beta) = 2 (1-\beta)\chi(\Sigma) \pm (1-\beta^2 ) [\Sigma ]^2~.
\end{equation}
However, equations (\ref{equiv1})
and  (\ref{equiv2}) are interchanged  by simply reversing the orientation of $M$. 
 To prove Propositions  \ref{thing1} and \ref{thing2}, it  therefore suffices to 
 prove that (\ref{equiv1}) holds for any edge-cone metric on $(M,\Sigma)$ with cone angle $2\pi\beta$,
 with  defect $\dft_+(\Sigma, \beta)$ given by (\ref{defect}). 
 We now simplify the problem further by showing  that, for each 
 $(M,\Sigma)$ and  $\beta$,  it suffices to check 
 that   (\ref{equiv1}) holds   for a single  edge-cone metric  on $(M,\Sigma)$  of 
 cone angle $2\pi\beta$.

\begin{lem} 
\label{pram} 
Let $(M,\Sigma)$ be a smooth compact oriented $4$-manifold equipped with
a smooth compact oriented embedded surface, and let $\beta$ be a positive real number.
Fix two real constants 
  $a$ and $b$,  and  consider  the formula for $(a\chi + b\tau)(M)$ obtained by 
  taking the corresponding  linear combination of \eqref{prop1} and \eqref{prop2}. 
  If this formula  holds for one  edge-cone metric on $(M,\Sigma)$ of cone angle $2\pi \beta$, 
  it also holds for 
  every other  edge-cone metric   on $(M,\Sigma)$ 
of  the same  cone angle. 
\end{lem}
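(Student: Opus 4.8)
The plan is to notice that the assertion reduces to the metric-independence of a single curvature integral. Write the combined formula for $(a\chi+b\tau)(M)$ as ``left-hand side $=$ right-hand side,'' where the left-hand side is $(a\chi+b\tau)(M)$ minus the correspondingly-combined defect. Every quantity appearing on the left --- namely $\chi(M)$, $\tau(M)$, $\chi(\Sigma)$, $[\Sigma]^2$, and $\beta$ --- depends only on the pair $(M,\Sigma)$ and on the prescribed cone angle, and not on the particular metric. Hence the left-hand side is the same for all edge-cone metrics of cone angle $2\pi\beta$, and the lemma is equivalent to the claim that the right-hand side, which I write as the curvature integral $I(g):=\int_{M-\Sigma}\mathcal{P}(g)\,d\mu$ (here $\mathcal{P}(g)\,d\mu$ is the universal quadratic curvature form obtained as the matching combination of the integrands of \eqref{prop1} and \eqref{prop2}), takes the same value for any two such metrics.

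To prove this, I would join two given edge-cone metrics $g_0$ and $g_1$ of cone angle $2\pi\beta$ by the straight-line path $g_t=(1-t)g_0+tg_1$, $t\in[0,1]$. A convex combination of positive-definite tensors is again positive-definite, and the expansion \eqref{edgesum}--\eqref{edgedef} is preserved: the singular coefficient $\beta^2$ of $\rho^2(d\theta+u_jdx^j)^2$ is common to $g_0$ and $g_1$, while the tangential data $w_{jk}$, $u_j$ interpolate and any resulting lower-order discrepancy is absorbed into the conormal remainder. Thus each $g_t$ is again an edge-cone metric on $(M,\Sigma)$ of cone angle $2\pi\beta$. Now $\mathcal{P}(g)\,d\mu$ is, up to fixed normalizing constants, the Chern--Weil form $a\,e(g)+\tfrac{b}{3}p_1(g)$ built from the Levi-Civita connection, so on the smooth open manifold $M-\Sigma$ it is closed and its variation along the path is exact: $\tfrac{d}{dt}\bigl(\mathcal{P}(g_t)\,d\mu\bigr)=dT_t$ for a transgression $3$-form $T_t$ built from $\dot g_t=g_1-g_0$ and the curvature of $g_t$. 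Integrating in $t$ and applying Stokes' theorem on the complement $M-N_\delta$ of the $\delta$-tube $N_\delta$ about $\Sigma$ yields
\[
I(g_1)-I(g_0)=\int_0^1\!\!\int_{M-\Sigma}dT_t\,dt
=-\lim_{\delta\to0}\int_0^1\!\!\int_{\partial N_\delta}T_t\,dt,
\]
so everything comes down to the behavior of the boundary transgression on the shrinking tube.

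The crux, and the only place where equality of the cone angles is used, is showing that this boundary limit vanishes. The decisive point is that $\dot g_t=g_1-g_0$ has \emph{no} pure angular $\rho^2\,d\theta^2$ component: this would-be term carries the factor $\beta_1^2-\beta_0^2$, which is zero precisely because the two cone angles agree, so that $\dot\beta\equiv0$ along the path. Estimating $T_t$ --- which is first order in $\dot g_t$ and otherwise polynomial in the curvature and connection of $g_t$ --- against the conormal expansion, one finds that the contributions from the interpolated tangential data $w_{jk},u_j$ and from the $O(\rho^{1+\varepsilon})$ remainder are dominated by the vanishing length $\sim2\pi\beta\delta$ of the angular fibers of $\partial N_\delta$ together with the bounded transverse geometry, and hence tend to $0$. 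The only potentially surviving residue comes from the connection forms encoding the cone structure (such as $\omega^{1}{}_{2}\sim-\beta\,d\theta$, whose coefficient does not shrink with $\delta$), and this residue is proportional to $\dot\beta$; since $\dot\beta\equiv0$, it too vanishes. I expect the main obstacle to lie exactly in this last estimate: one must verify that the genuinely singular curvature behavior of $g_t$ near $\Sigma$ (concentrated like $\rho^{-2}$ in the worst directions) is tamed by the vanishing of the angular part of $\dot g_t$, so that no finite residue survives in the limit --- this being the analytic counterpart of the fact that, with the cone angle held fixed, the concentrated singular contribution to the characteristic numbers is identical for all such metrics and is already absorbed into the defect terms on the left.
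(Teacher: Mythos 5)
Your strategy coincides with the paper's: interpolate linearly between the two metrics, write the $t$-variation of the Gauss--Bonnet and signature integrands as exact forms, apply Stokes' theorem on the complement of a $\delta$-tube about $\Sigma$, and show that the boundary contribution vanishes as $\delta\to 0$. The difficulty is that the entire content of the lemma lives in that final boundary estimate, and you have left it as an announced ``main obstacle,'' supported only by a heuristic (that the surviving residue should be proportional to $\dot\beta$, hence zero). That heuristic is not the mechanism that actually closes the argument, and as written the decisive step is missing. The paper's resolution is concrete: set $\tilde\theta=\beta\theta$ and pass to coordinates $\tilde y^1=\rho\cos\tilde\theta$, $\tilde y^2=\rho\sin\tilde\theta$ on transversely wedge-shaped regions. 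In these coordinates the background model $\bar g$ becomes a smooth $\mathbf{SO}(2)$-invariant metric, and the conormal hypothesis on $h$ then gives that first derivatives of the metric components are bounded, second derivatives are $O(\rho^{-1+\varepsilon})$, the Christoffel symbols are bounded, and $|\mathcal R|_g=O(\rho^{-1+\varepsilon})$ --- not the $\rho^{-2}$ blow-up you are worried about. Combining this with the pointwise bound $|\phi|,|\psi|\le|\mathcal R|\,|\nabla\dot g|$ on the transgression $3$-forms and with the boundedness of $\nabla\dot g$ in these coordinates (which is where the equality of cone angles enters: $\dot g=g'-g$ is then a tensor with bounded first derivatives in the wedge coordinates), one gets $|\phi_t|\le C\rho^{-1+\varepsilon}$; since the $3$-volume of $\partial M_\delta$ is $O(\delta)$, the boundary integral is $O(\delta^{\varepsilon})$ and vanishes in the limit. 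Some such quantitative input is indispensable; without it the Stokes argument proves nothing.

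A secondary but genuine issue: you assert that the straight-line path $g_t=(1-t)g_0+tg_1$ consists of edge-cone metrics of cone angle $2\pi\beta$, but this is only clear after one first applies a diffeomorphism of $(M,\Sigma)$ arranging that the two metrics use the same radius function and the same identification of the normal bundle of $\Sigma$ with a tubular neighborhood. If the two transversal polar coordinate systems are not aligned, the interpolation need not have the normal form \eqref{edgesum}--\eqref{edgedef}, and the ``lower-order discrepancy'' you propose to absorb into the conormal remainder need not actually be lower order. The paper performs this normalization explicitly before interpolating, noting that it does not affect the curvature integrals.
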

\begin{proof}
The Gauss-Bonnet and signature integrands are multiples of 
the $4$-forms
\begin{eqnarray*}
\Phi_{abcd} &=& (\star \mathcal{R})_{jk[ab}{\mathcal{R}^{jk}}_{cd]}
\\
\Psi_{abcd} &=& \mathcal{R}_{jk[ab}{\mathcal{R}^{jk}}_{cd]}
\end{eqnarray*}
corresponding to the Pfaffian and second symmetric polynomial of  curvature.
Given a
one-parameter family 
 $$g_{\zap t}:= g +{\zap t}\dot{g}+ O({\zap t}^2), $$ of Riemannian metrics, the 
 ${\zap t}$-derivative
 (at ${\zap t}=0$)  
 of the curvature operator ${\mathcal R}: \Lambda^2\to \Lambda^2$ is given by 
 $${{\dot{\mathcal{R}}^{ab}}_{cd}}=
-2\nabla_{[c}\nabla^{[a}\dot{g}^{b]}_{d]}+\dot{g}^{e[a}{\mathcal{R}^{b]}}_{ecd} ,$$
where $\nabla$ is the Levi-Civita connection of $g$, and where indices are raised and
lowered with respect to $g$. Thus
\begin{eqnarray*}
\dot{\Phi}_{abcd} &=& -2 \nabla_{[a|}\left({\epsilon}_{mnjk}{\mathcal{R}^{mn}}_{|cd}\nabla^{j}\dot{g}^k_{b]}\right) \\\ \dot{\Psi}_{abcd}&=& -4  \nabla_{[a|}\left( \mathcal{R}_{jk|cd}\nabla^{j}\dot{g}^k_{b]}
\right)
\end{eqnarray*}
where the cancellation of the purely algebraic terms is a nice exercise in the representation theory of $\mathbf{SO}(4)$. In other words, 
$$
\dot{\Phi} = d\phi  \qquad\qquad 
\dot{\Psi} = d\psi $$
for $3$-forms
\begin{eqnarray*}
\phi_{bcd} &=& - (\star \mathcal{R})_{jk[bc}\nabla^j\dot{g}^k_{d]}\\
\psi_{bcd} &=&-  \mathcal{R}_{jk[bc}\nabla^j\dot{g}^k_{d]}
\end{eqnarray*}
which obviously satisfy 
\begin{equation}
\label{notbad}
|\phi| , |\psi| \leq |\mathcal R| |\nabla \dot{g}|~.
\end{equation}

Now we have defined an edge-cone metric $g$ on $(M,\Sigma)$ of cone angle $2\pi \beta$ to be 
a tensor field which is smooth on $M-\Sigma$, and which can be written
as $\bar{g}+\rho^{1+\varepsilon}h$ for some $\varepsilon > 0$, where $h$
has infinite conormal regularity at $\Sigma$, and where the background metric $\bar{g}$ 
takes
the form 
$$
\bar{g}= d\rho^2 + \beta^2\rho^2 (d\theta +\nu )^2 + g_\Sigma
$$
in suitable transverse polar coordinates, where $\nu$ and $g_\Sigma$ are the pull-backs to a tubular neighborhood
of $\Sigma$ of a smooth $1$-form and a smooth Riemannian metric on $\Sigma$. 
While $\rho$ and $\theta$ can be  related to a system of smooth  coordinates  
$(y^1,y^2,x^1,x^2)$ by $y^1=\rho \cos \theta$, $y^2=\rho \sin \theta$, we now 
set $\tilde{\theta}= \beta\theta$, and 
introduce  new local coordinates on  transversely 
wedge-shaped regions by setting $\tilde{y}^1= \rho \cos \tilde{\theta}$, $\tilde{y}^2= 
\rho \sin \tilde{\theta}$. In these coordinates, $\bar{g}$ simply appears to 
be a smooth metric with $\mathbf{SO}(2)$-symmetry around $\Sigma$, while the tensor
field $h$ still has the same infinite conormal regularity as before. Thus the 
 first derivatives of the components of $g$ in $(\tilde{y}, x)$ coordinates 
 are smooth plus terms of order 
 $\rho^\varepsilon$, while the second derivatives  are no worse than 
 $\rho^{-1+\varepsilon}$. Since $g^{-1}$ is also continuous across $\Sigma$, it follows
 that  the  Christoffel symbols $\Gamma^{j}_{k\ell}$ of $g$ in  $(\tilde{y}, x)$ coordinates are bounded, 
 and that the norm $|\mathcal{R}|_g$ of the curvature tensor at worst blows up 
 like $\rho^{-1+\varepsilon}$.

 Given two edge-cone 
metrics   $g$ and $g^\prime$ on $(M,\Sigma)$ of the same cone angle $2\pi \beta$, 
we first apply a diffeomorphism to $(M,\Sigma)$ in order to arrange that the 
two choices of  radius functions $\rho$ and identifications 
of the normal bundle of $\Sigma$ with a tubular neighborhood
agree; thus, without interfering with our curvature integrals, we can assume that 
the two given choices of $\bar{g}$ differ only insofar as they involve different choices of 
$\nu$ and $g_\Sigma$. The $1$-parameter family $g_{{\zap t}} = (1-{\zap t}) g+ {\zap t}g^\prime$,
${\zap t}\in [0,1]$ is then a family of edge-cone metrics on $(M,\Sigma)$ 
of fixed cone angle $2\pi\beta$.

 We will now show that 
$\int_M\Phi_{g_{\zap t}}$ and $\int_M\Psi_{g_{\zap t}}$ are independent of ${\zap t}$. 
To see this, let us write 
$$
\frac{d}{dt} \Phi_{g_{\zap t}}= d \phi_{{\zap t}} \qquad \qquad
 \frac{d}{dt} \Psi_{g_{\zap t}}= d \psi_{{\zap t}}
$$
as above, and notice that  \eqref{notbad} tells us that 
$$ |\phi_{\zap t} | , |\psi_{\zap t} | \leq C \rho^{-1+\varepsilon}$$
for some positive constants $C$ and $\varepsilon$ determined by $g$ and $g^\prime$, 
since 
 the first derivatives of $\dot{g}= g^\prime-g$ 
 in $(\tilde{y},x)$ coordinates are bounded. 
Now let  $M_\delta$ denote the complement of a tube $\rho < \delta$ around
$\Sigma$. Then 
$$
\left| \frac{d}{dt} \int_{M_\delta} \Phi_{g_{\zap t}}\right| = \left| \int_{M_\delta} \frac{d}{dt}{\Phi}_{g_{\zap t}}\right|
= \left| \int_{\partial M_\delta}\phi_{g_{\zap t}}\right| < 
C \delta^{-1+\varepsilon} \vol^{(3)}(\partial M_\delta, g_{\zap t} )< 
\tilde{C} \delta^\varepsilon 
$$
for some ${\zap t}$-independent constant $\tilde{C}$. Integrating in ${\zap t}\in [0,1]$, we thus have 
$$
\left| 
 \int_{M_\delta} \Phi_{g^\prime} -  \int_{M_\delta} \Phi_{g}
 \right| < \tilde{C} \delta^{\varepsilon}~,
$$
and taking the limit $\delta \to 0$ therefore yields
$$
 \int_{M} \Phi_{g^\prime} = \int_{M} \Phi_{g}~.
$$
Replacing $\phi_{\zap t}$ with $\psi_{\zap t}$ similarly proves that 
 $\int_M\Psi_{g^\prime}=\int_M\Psi_g$.  Thus, if a given linear combination of 
 \eqref{prop1} and \eqref{prop2} is true for some edge-cone metric
 $g$, it is also true for any other edge-cone metric $g^\prime$ of the same cone angle.   
\end{proof}

We may thus focus our attention on proving \eqref{equiv1} for some  particular edge-cone metric
for  each $(M,\Sigma)$ and $\beta > 0$.   For any given metric, 
let us therefore  adopt the provisional  notation 
\begin{equation}
\label{strat1a}
\Upsilon = \left(\frac{s^2}{24} + 2|W_+|^2  -\frac{|\mathring{r}|^2}{2}
 \right) d\mu
 \end{equation}
for the  $4$-form appearing as the integrand in (\ref{equiv1}). 
If $g$ is an edge-cone metric of cone angle $2\pi\beta$ on $(M,\Sigma)$ and if
$g_0$ is a smooth metric on $M$, then (\ref{equiv1}) is equivalent  to the claim
that 
 \begin{equation}
\label{strat1b}
\int_M \left( \Upsilon_g - \Upsilon_{g_0}
\right) = -4\pi^2 \dft_+(\Sigma, \beta )~,
\end{equation}
since integral of $\Upsilon_{g_{0}}$ is $4\pi^2 (2\chi + 3\tau )(M)$ 
by the 
standard Gauss-Bonnet and signature theorems. 
Of course, if we can actually arrange for 
$g$ and $g_0$  to exactly agree on the complement of some 
tubular neighborhood $\mathcal V$ of $\Sigma$, this reduces to the statement that 
$$
\int_{{\mathcal V}-\Sigma} \left( \Upsilon_g - \Upsilon_{g_0}
\right) = -4\pi^2 \dft_+(\Sigma, \beta )
$$
since $\Upsilon_g-\Upsilon_{g_0}$ is then supported in ${\mathcal V}$, and 
$\Sigma$ has $4$-dimensional measure zero. 
In the  proof that follows, we  will  not only choose $g$ and $g_0$ to be  related
in this manner, but  also   arrange for   both of them to be K\"ahler on $\mathcal V$. Now 
$$|W_+|^2 = \frac{s^2}{24}$$
for any K\"ahler metric on a compatibly oriented 
$4$-manifold, so (\ref{strat1a})   simplifies in the K\"ahler case to become 
\begin{equation}
\label{strat2}
\Upsilon = \frac{1}{2} \left(\frac{s^2}{4} - |\mathring{r}|^2
 \right) d\mu 
 = \varrho \wedge \varrho
 \end{equation}
 where $\varrho$ is the Ricci form. This will reduce the  problem  to showing that
\begin{equation}
\label{strat3}
\int_{{\mathcal V}-\Sigma}\left(\varrho_g\wedge \varrho_g - \varrho_{g_0}\wedge  \varrho_{g_0}\right)
= -4\pi^2 (1-\beta)\left[ 2 \chi(\Sigma) + (1+\beta ) [\Sigma ]^2\right]
\end{equation}
and, by exploiting some 
key  properties of the Ricci form, this will follow by an application of Stokes' theorem. 

Our choice of  $g_0$ will involve   an auxiliary function 
$F(t)$, which we will now  construct. First let $f: \RR \to  \RR^+$ be a smooth 
positive function  with 
\begin{equation}
\label{grecian}
\int_0^1 f(t)dt = \frac{1}{\beta}
\end{equation}
such that 
$$
f(t)=
\begin{cases}
1/\beta& \mbox{ when } t \leq \frac{1}{2} \mbox{ and }\\
t^{\beta -1}& \mbox{ when } t \geq 1.
\end{cases}
$$
We next define $F(t)$ up to a  constant of integration by setting 
\begin{equation}
\label{urn}
\frac{dF}{dt}= \frac{1}{t}\int_0^t f(x) dx ~,
\end{equation}
where this definition of course entails that $F^\prime \equiv 1/\beta$ for $t\leq 1/2$. 
Because $F$ consequently solves the differential equation 
 \begin{equation}
\label{ode}
\frac{d}{dt}\left( t ~\frac{dF}{dt}\right) = f(t) 
\end{equation}
it follows that the smooth, rotationally symmetric metric on the $\zeta$-plane $\CC$
 with K\"ahler form  
\begin{equation}
\label{shape}
i\partial\bar{\partial}F (|\zeta |^2) = i f(|\zeta |^2)~ d\zeta\wedge \bar{\zeta}
\end{equation}
is Euclidean near the origin and coincides with a standard cone of perimeter angle $2\pi \beta$
outside the unit disk. Condition (\ref{grecian}) guarantees that $F^\prime (1) = 1/\beta$,
so inspection of the differential equation  (\ref{ode}) tells us  that 
$$F(t)= \frac{t^\beta}{\beta^2}+  B$$
for all $t\geq 1$,  for a constant  of integration $B$ which we may take to vanish.

 \begin{prop} 
 \label{stroller}
 For any  compact oriented pair $(M^4 ,\Sigma^2)$   and any positive real number 
 $\beta$, there is an edge-cone metric $g$ on  $(M,\Sigma)$ with cone angle $2\pi \beta$
 such that {\rm (\ref{equiv1})} holds, with defect $\dft_+$  given by {\rm (\ref{defect})}. 
 \end{prop}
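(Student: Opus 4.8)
The plan is to capitalize on Lemma \ref{pram}: since it suffices to verify \eqref{equiv1} for a \emph{single} edge-cone metric of cone angle $2\pi\beta$ on $(M,\Sigma)$, I would build a pair $(g,g_0)$ that is K\"ahler near $\Sigma$ and for which the left-hand side of \eqref{strat3} can be read off by Stokes' theorem. Endow $\Sigma$ with the complex structure determined by its orientation and induced metric, making it a compact Riemann surface, and regard the normal bundle $L=N_\Sigma$ as a complex line bundle of degree $[\Sigma]^2$. After selecting a holomorphic structure on $L$ (which exists since $\mathrm{Pic}(\Sigma)$ surjects onto $H^2(\Sigma,\ZZ)$), the total space of $L$ becomes a complex surface whose zero section is a copy of $\Sigma$. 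Identify a tubular neighborhood $\mathcal V$ of $\Sigma$ with the unit-disk subbundle $\{t\leq 1\}$, where $t=\|\zeta\|^2$ is the squared fiber norm for a chosen Hermitian metric on $L$, and on $\mathcal V$ set
\[
\omega_g = \pi^*\omega_\Sigma + i\partial\bar\partial F_{\mathrm{cone}}(t), \qquad \omega_{g_0}=\pi^*\omega_\Sigma + i\partial\bar\partial F(t),
\]
where $F$ is the function built in \eqref{urn} and $F_{\mathrm{cone}}(t)=t^\beta/\beta^2$. Because $F(t)=t^\beta/\beta^2$ for $t\geq 1$, the two K\"ahler forms agree on $\{t\geq 1\}$, so their metrics glue to a common smooth metric on $M-\Int\mathcal V$; the fiberwise model \eqref{shape} shows that $g$ has cone angle $2\pi\beta$ along $\Sigma$ while $g_0$ is smooth across it.

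Both metrics are K\"ahler on $\mathcal V$, so \eqref{strat2} gives $\Upsilon=\varrho\wedge\varrho$ there, and since $g=g_0$ outside $\mathcal V$ the task collapses to verifying \eqref{strat3}. To evaluate its left-hand side I would invoke two standard facts about K\"ahler Ricci forms: each $\varrho$ is closed, and the difference $\varrho_g-\varrho_{g_0}=-i\partial\bar\partial\varphi$ is exact, with $\varphi=\log(\det g/\det g_0)$ the globally defined logarithmic ratio of volume densities. Writing $\varrho_g-\varrho_{g_0}=d\eta$ for the real $1$-form $\eta=\tfrac{i}{2}(\partial-\bar\partial)\varphi$ and using $d\varrho_g=d\varrho_{g_0}=0$ yields
\[
\varrho_g\wedge\varrho_g - \varrho_{g_0}\wedge\varrho_{g_0} = (\varrho_g-\varrho_{g_0})\wedge(\varrho_g+\varrho_{g_0}) = d\left[\eta\wedge(\varrho_g+\varrho_{g_0})\right].
\]
Applying Stokes' theorem on $\mathcal V-\{t<\delta\}$ and letting $\delta\to 0$ then reduces the bulk integral to a boundary term over the shrinking circle bundle $\{t=\delta\}$, the outer boundary $\{t=1\}$ contributing nothing because there $g=g_0$ and hence $\varphi\equiv 0$.

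The crux, and the step I expect to be the main obstacle, is the evaluation of this boundary term as $\delta\to 0$. Here I would insert the explicit volume densities of the two Calabi-type metrics, which are rational expressions in $F'(t),F''(t)$ (respectively their cone analogues) together with the Gaussian curvature of $\omega_\Sigma$ and the curvature of the Hermitian connection on $L$; the ratio $\varphi$ and the restriction of $\varrho_g+\varrho_{g_0}$ to $\{t=\delta\}$ then have controllable small-$\delta$ asymptotics. Integrating over the $S^1$-fibers produces a factor of $2\pi$, while integrating the two surviving curvature contributions over $\Sigma$ yields $\int_\Sigma\mathrm{Ric}(\omega_\Sigma)=2\pi\chi(\Sigma)$ and $\int_\Sigma c_1(L)=[\Sigma]^2$. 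The delicate point is to track the powers of $t$ contributed by $F_{\mathrm{cone}}'=t^{\beta-1}/\beta$ against $F'\to 1/\beta$ near the core, since it is precisely these that convert the two geometric integrals into the coefficients $2(1-\beta)\chi(\Sigma)$ and $(1-\beta^2)[\Sigma]^2$, reassembling the defect $\dft_+(\Sigma,\beta)$ of \eqref{defect} up to the universal factor $-4\pi^2$. Confirming that the limit is finite and that the base-curvature and fiber cross terms leave no extra residue is where the bulk of the bookkeeping lies; once that is in hand, \eqref{strat3}, and with it \eqref{equiv1}, follows.
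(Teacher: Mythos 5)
Your construction is, almost verbatim, the one the paper uses: the Calabi ansatz on the holomorphic normal bundle with potentials $t^\beta/\beta^2$ and $F(t)$, agreement of the two K\"ahler forms for $t\geq 1$, the identity $\varrho_g\wedge\varrho_g-\varrho_{g_0}\wedge\varrho_{g_0}=d\bigl[\eta\wedge(\varrho_g+\varrho_{g_0})\bigr]$ for a primitive $\eta$ of $\varrho_g-\varrho_{g_0}$ built from the log-ratio of volume densities, and Stokes' theorem on $\mathcal{V}\setminus\{t<\delta\}$ with only the inner boundary surviving. One minor flaw in the setup: taking the base form to be $\pi^*\omega_\Sigma$ on the nose does not guarantee positivity of $\pi^*\omega_\Sigma+i\partial\bar\partial F_{\mathrm{cone}}(t)$ on the whole unit disk bundle, since the horizontal component is $\omega_\Sigma$ diminished by a term proportional to the curvature of $L$ times $tF'(t)$, which can overwhelm $\omega_\Sigma$ when the degree of $L$ is large; the paper inserts a large constant $\lambda$ in front of $\varpi^*\alpha$ precisely to avoid this, and you need that (or an equivalent shrinking of the disk bundle).

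The substantive issue is that you stop exactly where the content of the proposition lives. The statement to be proved is a specific numerical identity, and everything up to the boundary term is soft; the defect $2(1-\beta)\chi(\Sigma)+(1-\beta^2)[\Sigma]^2$ is extracted only by actually computing the $\delta\to 0$ asymptotics of $\int_{\{t=\delta\}}\eta\wedge(\varrho_g+\varrho_{g_0})$, which your proposal defers with ``once that is in hand.'' For the record, this step is short once the connection on $L$ is chosen to have curvature $-i\kappa\alpha$ with $\kappa$ locally constant: along a fiber the volume densities are $V=t^{\beta-1}(\lambda-\tfrac{\kappa}{\beta}t^\beta)$ and $V_0=f(t)\,(\lambda-\kappa tF'(t))$, whence $\eta|_{\{t=\delta\}}=\bigl[(1-\beta)+O(\delta^{\min(1,\beta)})\bigr]\,d\theta$ and $j_\delta^*d\eta=\bigl[(\beta-1)+O(\delta^{\min(1,\beta)})\bigr]\,j_\delta^*\varpi^*(\kappa\alpha)$; fiber integration gives $2\pi(1-\beta)\int_\Sigma[2\varrho_0+(\beta-1)\kappa\alpha]$ in the limit, and identifying $[\varrho_0/2\pi]$ with $c_1(T^{1,0}\Sigma)+c_1(E)$ and $[\kappa\alpha/2\pi]$ with $c_1(E)$ converts this into $4\pi^2(1-\beta)\bigl(2\chi(\Sigma)+(1+\beta)[\Sigma]^2\bigr)$. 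So your expectation of the outcome is correct and no cross terms survive, but as written the argument is incomplete precisely at its quantitative heart.
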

 
\begin{proof}
 Choose any 
metric $g_\Sigma$ on $\Sigma$,  and 
remember that, in conjunction  with the given orientation, its conformal class
 makes $\Sigma$  into a 
compact complex curve; in particular,  the area form $\alpha$ of $g_\Sigma$ then becomes 
its  K\"ahler form.   Let $\varpi : E\to \Sigma$ be the normal bundle of 
$\Sigma\subset M$, and choose an inner product $\langle ~, ~\rangle$ on $E$;
this reduces the structure group of $E$ to $\mathbf{SO}(2)=\mathbf{U} (1)$, and so makes
it into a complex line bundle. Next, we choose 
 a  $\langle ~, ~\rangle$-compatible connection $\nabla$ on $E$  whose
  curvature  is a constant multiple of $\alpha$ on each connected component of $\Sigma$. Viewing $\nabla^{0,1}$ as a $\bar{\partial}$-operator on $E$ then makes
 it into a holomorphic line bundle over $\Sigma$, in a unique manner that 
identifies  $\nabla$ with the Chern connection induced by $\langle ~, ~\rangle$ and the 
holomorphic  structure. Thus 
    the curvature of $\nabla$ 
is   $- i \kappa  \alpha$, where the locally constant real-valued function $\kappa$
takes the value 
$$\kappa|_{\Sigma_j}  = \frac{2\pi\int_{\Sigma_j} c_1(E)}{\int_{\Sigma_j} \alpha} 
=\frac{2\pi ~[\Sigma_j ]^2}{\int_{\Sigma_j} \alpha}$$ 
on the $j^{\rm th}$ connected component 
$\Sigma_j$ of $\Sigma$. 

We will let $t: E\to \RR$ denote the square-norm function $t( v) = \| v \|^2$, and 
our computations will involve various closed $(1,1)$-forms expressed as  
$i \partial \bar{\partial} u(t)$ for various  functions $u(t)$. 
To understand such expressions explicitly, first choose
a local coordinate $z$ on $\Sigma$ so that near the origin 
$$\alpha = i [1 + O(|z|^2)] dz\wedge d \bar{z}, $$
and then choose a local trivialization 
of $E$ determined by a local holomorphic section $\xi$ with vanishing covariant derivative and  unit norm at the origin. Then the function ${\zap h} :=  \|\xi\|^2$ satisfies 
$$
{\zap h} = 1 + O(|z|^2), \hspace{.5in} \partial \bar{\partial}  {\zap h} = -\kappa ~dz\wedge d\bar{z} + O(|z|),
$$
because   $-\partial \bar{\partial} \log {\zap h}= - i \kappa  \alpha$ is the curvature 
 of $E$. Thus, introducing a fiber coordinate $\zeta$ associated with the local trivialization, 
then, near the $\zeta$-axis which represents the fiber over $z=0$, we have 
$$
i \partial \bar{\partial} u(t)= i (tu^\prime)^\prime d\zeta\wedge  d\bar{\zeta} - i\kappa  (t u^\prime) dz \wedge d\bar{z} +  O(|z|)
$$
where $t=|\zeta|^2$ along the $\zeta$-axis. Since the chosen point $z=0$ of $\Sigma$ was in fact
arbitrary,   this calculation of course  actually
computes $i \partial \bar{\partial} u(t)$  along {\em any} fiber.  
For example,  consider the $(1,1)$-forms 
\begin{equation}
\label{conehead}
\omega = \lambda \varpi^*\alpha + i \partial\bar{\partial} \left(\beta^{-2} t^\beta \right)
\end{equation}
and
\begin{equation}
\label{flathead}
\omega_0 = \lambda \varpi^*\alpha + i \partial\bar{\partial} F(t)
\end{equation}
on $E$, for some large positive constant $\lambda$, where $F$ is given by \eqref{grecian}. Explicitly, these are given 
along the $\zeta$-axis of our coordinate system by 
\begin{eqnarray*}
\omega~ &=&  i t^{\beta-1} d\zeta\wedge  d\bar{\zeta} + i (\lambda - \frac{\kappa }{\beta}t^\beta)  dz \wedge d\bar{z} +O(|z|) \\
\omega_0 &=&   i  f(t)  d\zeta\wedge  d\bar{\zeta}+ i (\lambda - \kappa  t F^{\prime}(t))  dz \wedge d\bar{z} +O(|z|) 
\end{eqnarray*}
so, for  $\lambda$  sufficiently large, 
these  are the  K\"ahler forms of  K\"ahler metrics 
$\tilde{g}$ and $\tilde{g_0}$ defined on, say,  the region $0<t<3$. Notice 
that $\tilde{g}_0$ is smooth across $t=0$, and that we have arranged for $\tilde{g}$ and 
$\tilde{g}_0$ to coincide when  $t> 1$. Also  observe that $\tilde{g}$ becomes
 a genuine edge-cone metric on $(M, \Sigma)$ after making the coordinate change $\zeta = (\beta \rho)^{1/\beta} e^{i \theta}$. While one could  
 object that this coordinate change actually represents
 a self-homeomorphism of $M$ which is only smooth away from  $\Sigma$, 
  this is completely harmless  for present purposes, since  the relevant
 curvature integrals will  actually be performed on $M-\Sigma$.

We now identify $E$ with a tubular neighborhood
$\mathcal U$ of $\Sigma$ via some diffeomorphism, and, for
any real number $T> 0$, we let   ${\mathcal U}_T\subset {\mathcal U}$ denote 
the closed tubular neighborhood 
corresponding to the region  $t\leq T$ of $E$. We then use a cut-off function to extend $\tilde{g}$ and $\tilde{g}_0$ to  $M$ 
as Riemannian metrics,  
in such a way that they exactly agree on the complement of ${\mathcal U}_1$,
but are undamaged by the cut-off on  ${\mathcal U}_2$. Calling these extensions
$g$ and $g_0$, respectively, we then see that   our basic desiderata have all been fulfilled: 
$g$ is an edge-cone metric of cone angle $2\pi \beta$ on $(M,\Sigma)$, $g_0$ is a smooth
Riemannian metric on $M$, both are K\"ahler on a tubular neighborhood 
${\mathcal V} = \Int  {\mathcal U}_2$ of $\Sigma$, and the two metrics agree on the complement of
a smaller  tubular neighborhood ${\mathcal U}_1$ of $\Sigma$. 

Since $i$ times the Ricci form is the curvature of the canonical line bundle, 
\begin{equation}
\label{ricky}
\varrho-\varrho_0 = d\varphi
\end{equation}
where the $1$-form 
\begin{equation}
\label{fatso}
\varphi = i \partial \log (V/V_0) 
\end{equation}
is defined in terms of  the ratio $V/V_0= d\mu_g/d\mu_{g_0}$  of the volume forms of $g$ and $g_0$. 
Thus
$$
\varrho^2 - \varrho_0^2 = (\varrho- \varrho_0) \wedge  (\varrho + \varrho_0)=  d \left(
 \varphi \wedge [ 2\varrho_0 + d\varphi ] 
\right) 
$$
and Stokes' theorem therefore tells us that  
\begin{equation}
\label{stokes}
\int_{M-{\mathcal U}_\epsilon}\left( \Upsilon_g -\Upsilon_{g_0}\right) = 
 \int_{{\mathcal U}_2-{\mathcal U}_\epsilon}
 d \left(
 \varphi \wedge [ 2\varrho_0 + d\varphi ] 
\right) 
 = - \int_{S_\epsilon}  \varphi \wedge [ 2\varrho_0 + d\varphi ] 
\end{equation}
where   the level set $S_\epsilon$ defined by $t= \epsilon$ has been given 
 the outward pointing orientation
relative to $\Sigma$. 
However, relative to 
 the basis provided by  the $4$-form  $- dz\wedge d\bar{z}\wedge d\zeta\wedge d \bar{\zeta}$ 
along the $\zeta$-axis, the volume forms of $g$ and $g_0$ are 
represented by the component functions  
$$
V= t^{\beta -1} (\lambda - \frac{\kappa }{\beta}t^{\beta}) ~~\mbox{ and }~~
V_0=  (\lambda - \kappa   t F^\prime (t))f(t)   
$$
where the latter simplifies when $t < \frac{1}{2}$ to become 
$$
V_0= 
(\lambda - \frac{\kappa }{\beta} t )\beta^{-1} ~. 
$$
Hence the $1$-form defined by (\ref{fatso}) is given by 
$$
\varphi = i \left(( \beta -1) - \frac{\kappa  t^{\beta}}{\lambda - \frac{\kappa }{\beta}t^\beta} +  \frac{\kappa t/\beta }{\lambda - \frac{\kappa }{\beta}t}\right) \partial \log t 
$$
in the region ${\mathcal U}_{1/2}-\Sigma$. Restricting this to $S_\epsilon$, then along
the $\zeta$-axis this expression is just 
\begin{eqnarray}
\varphi
&=&  \left( (1-\beta) +\frac{\kappa  \epsilon^{\beta}}{\lambda - \frac{\kappa \epsilon}{\beta}\epsilon^{\beta}} -  \frac{\kappa  \epsilon /\beta }{\lambda - \frac{\kappa }{\beta}\epsilon}\right)   ~d\theta \nonumber 
\\&=&  \left[ (1- \beta ) + O (\epsilon^{\min (1, \beta)} ) \right]  d\theta \label{around}
\end{eqnarray}
since  $\zeta= \sqrt{\epsilon} e^{i\theta}$ along the intersection of  $S_\epsilon$
and the $\zeta$-axis. On the other hand, 
\begin{eqnarray*}
d\varphi &=& -i\partial\bar{\partial} \log (V/V_0)\\
 &= & i\kappa    \left( (\beta -1) - \frac{\kappa  t^{\beta}}{\lambda - \frac{\kappa }{\beta}t^\beta} + \frac{\kappa  t/\beta }{\lambda - \frac{\kappa }{\beta}t}\right) dz\wedge d\bar{z} + U(t) d\zeta \wedge d\bar{\zeta}
\end{eqnarray*}
along the $\zeta$-axis, for some function $U(t)$. Since this calculation is valid along any fiber, 
it follows that, for $0< \epsilon  <1$, 
$$j_\epsilon^* d\varphi = \left[  (\beta -1) + O (\epsilon^{\min (1, \beta)} )\right]   j_\epsilon^*\varpi^* (\kappa \alpha)$$ 
where   $j_\epsilon : S_\epsilon \hookrightarrow M$
denotes the inclusion map, and $\varpi : E\to \Sigma$ once again denotes the bundle projection.  
Similarly, letting $j : \Sigma \hookrightarrow M$ be the inclusion,  we have 
$$j_\epsilon^* \varrho_0  = j_\epsilon^*\varpi^* j^* \varrho_0 + O(\epsilon)$$
since $\varrho_0$ is smooth across $\Sigma$ and is invariant under the action of 
$S^1=\mathbf{U} (1)$ on $E$.  Integration over the fibers of $S_\epsilon \to \Sigma$ therefore yields
\begin{equation}
\label{victory}
\int_{S_\epsilon} \varphi \wedge [ 2\varrho_0 + d\varphi ] = 
2\pi  (1-\beta)  \int_\Sigma [2\varrho_0 +   (\beta -1) \kappa\alpha ] ~~~+ O (\epsilon^{2\min(1,\beta)})
\end{equation}
by virtue of  (\ref{around}).
But since $\varrho_0/2\pi$  represents $c_1(T^{1,0}E)=  c_1(T^{1,0}\Sigma)+ c_1(E)$ in deRham cohomology, and since $c_1(E)$ is similarly represented by $\kappa  \alpha/2\pi$, we have 
\begin{eqnarray*}
 \int_\Sigma [2\varrho_0 +   (\beta -1) \kappa\alpha ] 
 &=& 2\pi \left[2 \left(\mathbf{c_1}(T^{1,0}\Sigma )
 +\mathbf{c_1}(E) \right)+ (\beta -1)\mathbf{c_1}(E) \right] \\
 &=& 2\pi \left[2{\mathbf c_1}(T^{1,0}\Sigma) +  (\beta +1){\mathbf c_1}(E) \right] \\
  &=& 2\pi \left(2\chi(\Sigma) +  (\beta +1)[\Sigma ]^2 \right)
\end{eqnarray*}
where the boldface Chern classes have been evaluated  on the homology class of $\Sigma$. 
Plugging this into (\ref{victory}), we obtain  
$$
\int_{S_\epsilon} \varphi \wedge \left[ 2\varrho_0 + d\varphi \right] = 
4\pi^2 (1-\beta)  \left(2\chi(\Sigma) +(1+\beta ) [\Sigma ]^2 \right)
~~~+ O (\epsilon^{2\min(1,\beta)})~.
$$
Substituting this   into (\ref{stokes}) and  taking the limit as $\epsilon \to 0$ thus yields  
\begin{equation}
\label{lap}
\int_{M}\left( \Upsilon_g -\Upsilon_{g_0}\right) = -4\pi^2  (1-\beta)  \left[2\chi(\Sigma) +  (1+ \beta  )[\Sigma ]^2\right]
\end{equation}
which is exactly the sought-after identity (\ref{strat1b}) for the particular metrics $g$ and $g_0$.
Applying the Gauss-Bonnet and signature fomul{\ae} to the smooth metric $g_0$
now transforms 
(\ref{lap}) into  
$$
\frac{1}{4\pi^2} \int_{M} \Upsilon_g = (2\chi + 3\tau ) (M) -  (1-\beta)  \left[2\chi(\Sigma) +  (1+ \beta  )[\Sigma ]^2\right] 
$$
which is exactly (\ref{equiv1}), with defect $\dft_+(M, \Sigma )$ given by (\ref{defect}).
 \end{proof}

Theorems \ref{thing1} and \ref{thing2} now follow. 
Indeed,   given any $(M,\Sigma)$ and $\beta$, 
Proposition \ref{stroller}   shows that \eqref{equiv1} holds for some edge-cone
metric $g$ on $(M, \Sigma)$ of cone angle $2\pi\beta$, and  Lemma \ref{pram} thus shows
that the same is true of any edge-cone metric on any $(M,\Sigma)$ for any $\beta$. 
Applying this conclusion to the 
orientation-reversed manifold $\overline{M}$  shows  that \eqref{equiv2} also holds, and
taking appropriate linear combinations then proves 
Theorems  \ref{thing1} and \ref{thing2}.

So far, we have assumed  that $\Sigma$ is oriented, but this is not  essential. 
Of course,  \eqref{prop1}  makes perfectly good sense even 
if $\Sigma$ is non-orientable,  but more must be said about 
\eqref{prop2}. 
As long as $M$ is oriented, the Euler number of the normal bundle of $\Sigma\subset M$
will have the  twisted coefficients needed to be consistently integrated on $\Sigma$,
and  this normal bundle will therefore have
 a well-defined  Euler number which  counts the zeroes, with multiplicities, 
 of a  generic section. When $\Sigma\subset M$ is non-orientable,  we now decree that 
 $[\Sigma]^2$ is to be interpreted in 
 \eqref{prop2} as   meaning
 the Euler number of its normal bundle 
 rather than being defined in terms of homology classes on $M$. 
 Now this Euler number can  be calculated by passing to an oriented  double cover of 
 $\Sigma$, integrating the Euler class of the pull-back, and then dividing by $2$;
 meanwhile,  the  correction term
  in \eqref{prop2} is represented by an integral   supported in a 
tubular neighborhood of $\Sigma$, so one  can similarly compute it by 
 passing to  a double cover of a tubular neighborhood of 
$\Sigma$ and then dividing by $2$. This covering  trick   allows us to  prove 
Theorem \ref{thing2} even when $\Sigma$ is non-orientable, 
and Theorem  \ref{thing1}   even if  neither $M$ nor $\Sigma$  is orientable. 


This observation  has a useful  corollary. 
Suppose that $M$ admits an  almost-complex structure 
$J$ and that $\Sigma \subset M$ is totally real with respect to $J$,
in the sense that $T\Sigma \cap J (T\Sigma) =0$ at every point of $\Sigma$. 
We now give $M$ the orientation induced by $J$, but  emphasize  that 
$\Sigma$ might not even  be  orientable. If $e_1,e_2$ is a basis $T\Sigma$ at some point, 
we now  observe that $(e_1,e_2, Je_1, Je_2)$ is then a {\em reverse-oriented}
basis for $TM$.  Under these circumstances, 
the Euler number of the normal bundle of $\Sigma$,
in the sense  discussed above, therefore equals $-\chi (\Sigma )$. In light of our previous
remarks, we thus obtain a close cousin  of Theorem \ref{thing2}:

\begin{prop} \label{thing3}
Let $(M,J)$ be an almost-complex $4$-manifold, equipped with 
the orientation induced by $J$, 
and let 
$\Sigma\subset M$ be a (possibly non-orientable) surface which 
is totally real with respect to $J$. Then, for any edge-cone metric  $g$ 
  of cone angle $2\pi \beta$ on $(M,\Sigma)$, 
\begin{equation}
\label{prop3}
 \tau (M)  + \frac{1}{3} (1-\beta^2) \chi (\Sigma ) = 
 \frac{1}{12\pi^2}\int_M \left(
  |W_+|^2 - |W_-|^2
 \right) d\mu ~.
\end{equation}
\end{prop}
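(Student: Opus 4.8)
The plan is to deduce Proposition \ref{thing3} directly from the non-orientable extension of Theorem \ref{thing2} discussed just above, the only genuine content being a sign computation for the normal bundle. First I would invoke Theorem \ref{thing2} in the generalized form already secured by the covering trick, in which $\Sigma$ is permitted to be non-orientable and $[\Sigma]^2$ is reinterpreted as the (twisted) Euler number of the normal bundle $N\Sigma\to\Sigma$. With this convention in force,
\[
\tau(M) - \frac{1}{3}(1-\beta^2)[\Sigma]^2 = \frac{1}{12\pi^2}\int_M \left(|W_+|^2 - |W_-|^2\right) d\mu
\]
holds for every edge-cone metric of cone angle $2\pi\beta$ on $(M,\Sigma)$, whether or not $\Sigma$ is orientable.

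The crux is then to show that the totally-real hypothesis forces $[\Sigma]^2 = -\chi(\Sigma)$. Since $T\Sigma \cap J(T\Sigma) = 0$ pointwise and the two subspaces are of complementary dimension, the endomorphism $J$ restricts to a bundle isomorphism $J : T\Sigma \xrightarrow{\sim} N\Sigma$ of the tangent bundle onto the normal bundle. I would compare orientations: for a frame $(e_1,e_2)$ of $T\Sigma$, the $J$-induced orientation of $TM$ is represented by $(e_1, Je_1, e_2, Je_2)$, whereas the frame $(e_1, e_2, Je_1, Je_2)$, which splits into tangential and normal pieces along $\Sigma$, differs from it by the single transposition $Je_1 \leftrightarrow e_2$ and is therefore reverse-oriented. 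Consequently $J$ carries the orientation of $T\Sigma$ to the opposite of the orientation $N\Sigma$ inherits from $T\Sigma$ and $M$, so $J$ is an orientation-reversing isomorphism and the Euler class of $N\Sigma$ is the negative of that of $T\Sigma$. Integrating gives Euler number $(N\Sigma) = -\chi(\Sigma)$, i.e.\ $[\Sigma]^2 = -\chi(\Sigma)$ in the sense fixed above.

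Substituting this relation into the displayed identity then yields
\[
\tau(M) + \frac{1}{3}(1-\beta^2)\chi(\Sigma) = \frac{1}{12\pi^2}\int_M \left(|W_+|^2 - |W_-|^2\right) d\mu,
\]
which is precisely \eqref{prop3}, completing the argument.

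The main point requiring care — the obstacle, such as it is — lies in the bookkeeping when $\Sigma$ is non-orientable. There both $\chi(\Sigma)$ and the Euler number of $N\Sigma$ are defined only through the oriented double cover $\tilde\Sigma\to\Sigma$, by pulling back, integrating, and dividing by $2$. I would verify that the orientation-reversing isomorphism $J$ lifts to $\tilde\Sigma$, so that the pointwise identification of the Euler classes of $T\Sigma$ and $N\Sigma$ passes upstairs intact; since the correction term in Theorem \ref{thing2} is itself represented by an integral supported in a tubular neighborhood of $\Sigma$ and hence transforms compatibly under the same covering, dividing by $2$ on both sides preserves $[\Sigma]^2 = -\chi(\Sigma)$, and the reduction goes through verbatim.
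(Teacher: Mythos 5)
Your proposal is correct and follows essentially the same route as the paper: both invoke the non-orientable extension of Theorem \ref{thing2} (with $[\Sigma]^2$ read as the twisted Euler number of the normal bundle) and both reduce the statement to the observation that $(e_1,e_2,Je_1,Je_2)$ is reverse-oriented relative to the $J$-orientation, so that $J:T\Sigma\to N\Sigma$ is an orientation-reversing isomorphism and the Euler number of $N\Sigma$ equals $-\chi(\Sigma)$. The paper states this in a single paragraph; your version merely spells out the transposition count and the double-cover bookkeeping, which the paper leaves implicit.
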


\pagebreak 
 
\section{Indices and Orbifolds}
 \label{argument}

 While the calculations used in  \S \ref{curvy} suffice to 
  prove Theorems \ref{thing1} and \ref{thing2}, 
  they hardly provide a transparent explanation of
   the detailed structure of   equations  
  (\ref{prop1}) and (\ref{prop2}). 
In this section, we will describe another method for obtaining 
these formul{\ae} that makes the edge-cone corrections seem
a great deal less mysterious. For brevity and clarity, we will 
confine ourselves  to providing  a second proof of Theorem 
 \ref{thing2}.  The same method can also be used to prove Theorem 
 \ref{thing1}, but several more elementary proofs are also possible in this case.

Our approach is  based on 
 the $\mathbf{G}$-index theorem \cite{indexiii}, so we  begin by recalling  
 what this tells us about  the signature operator in   four dimensions. 
    If a  finite group $\mathbf{G}$ 
 acts on a compact  oriented connected $4$-manifold $X$,  we can choose a
 $\mathbf{G}$-invariant  decomposition 
 $$H^2 (M, \RR) = H^+ \oplus H^-$$
of the second cohomology  into subspaces on which the intersection form 
is positive- and negative-definite; for example, we could equip $X$ with 
a   $\mathbf{G}$-invariant Riemannian metric $\hat{g}$, and let $H^\pm$ consist of those 
de Rham classes which 
self-dual or anti-self-dual harmonic representatives, respectively, with respect to this metric. 
For each  ${\zap g}\in \mathbf{G}$, 
we then let ${\zap g}_*$ denote the induced action of ${\zap g}$ on $H^2(X, \RR)$, and
set 
$$\tau ({\zap g}, X) = \tr ( {\zap g}_* |_{H^+}) -  \tr ( {\zap g}_* |_{H^-}) ~.$$
In particular,   $\tau ( 1, X)$ coincides with the usual signature $\tau (X)$. 
By contrast,  when ${\zap g}\neq 1$,     $\tau ( {\zap g}, X)$
is instead expressible in terms  of the fixed-point set $X^{\zap g}$ of ${\zap g}$. 
To do this, we first express   $X^{\zap g}$ as a disjoint union of isolated fixed points $x_j$ 
and compact  surfaces $\hat{\Sigma}_k$; at each $\hat{\Sigma}_k$,
${\zap g}$ then 
acts by rotating the normal bundle through some angle $\vartheta_k$, whereas
at each isolated fixed point $x_j$, ${\zap g}$ acts on $T_{x_j} X$ by rotating 
through  angles $\alpha_j$ and $\beta_j$ in a pair of orthogonal $2$-planes. 
 With these conventions, the $4$-dimensional case of  the relevant fixed-point formula  \cite[Proposition (6.12)]{indexiii}
 becomes 
 \begin{equation}
\label{attwo}
\tau({\zap g},X)=-\sum_j \cot\frac{\alpha_j}{2}\cot \frac{\beta_j}{2}+ \sum_k \left(\csc^2 \frac{\theta_k}{2}\right) [\hat{\Sigma}_k]^2
\end{equation}
where $j$ and $k$ respectively run over the $0$- and $2$-dimensional components 
of the fixed-point set $X^{\zap g}$. 

Let $M$ denote  the orbifold $X/\mathbf{G}$, and 
notice that 
 the $\mathbf{G}$-invariant
subspace $H^2(X, \RR)^{\mathbf{G}}$
of $H^2(X,\RR)$ can be   identified with 
$H^2(M, \RR)$ 
  via pull-back. Since 
$$\frac{1}{|\mathbf{G}|}\sum_{{\zap g}\in \mathbf{G}} {\zap g}_*
: H^2(X,\RR)\to H^2(X, \RR)^{\mathbf{G}} 
$$
is the $\mathbf{G}$-invariant projection,
and since the cup product commutes with pull-backs,  
we therefore have 
\begin{equation}
\label{atthree}
\tau (M) = \frac{1}{|\mathbf{G}|}  \left[ \tau(X) + \sum_{{\zap g}\neq 1} \tau({\zap g},X) \right].
\end{equation}

We now specialize our discussion by assuming
that the action has a fixed point, but that no  fixed point is isolated. 
Thus, if $\mathbf{G}\neq \{ 1\}$, 
there must be at least one 
fixed surface $\hat{\Sigma}_k$, and the induced action on the normal bundle of each 
such $\hat{\Sigma}_k$ must be 
effective. Hence 
 $\mathbf{G}=\ZZ_p$  for some positive integer $p$.  Moreover,
  for each $k$,   the exponentials $e^{i\vartheta_k}$
of  the rotation angles $\vartheta_k$ appearing  in (\ref{attwo}) 
must sweep through all the $p^{\rm th}$ roots of unity as 
${\zap g}$ runs through  $\ZZ_p$. 
 We can therefore   rewrite (\ref{atthree}) as 
\begin{eqnarray}
\label{resto}
\tau (M)  = \frac{1}{p}\left[ \tau (X)  +  \left(\sum_{k=1}^{p-1}\csc^2 \left[ \frac{k\pi}{p}\right] \right) [\hat{\Sigma}]^2\right]
\end{eqnarray}
where $\hat{\Sigma}=X^{\mathbf{G}}=\cup_k\hat{\Sigma}_k$. 
On the other hand, as pointed out by   Hirzebruch   \cite[\S 4]{hirzrr}, the
 trigonometric sum in \eqref{resto} has an algebraic simplification 
\begin{equation}
\label{reside}
\sum_{k=1}^{p-1}\csc^2 \left( \frac{k\pi}{p} \right)= \frac{p^2-1}{3}~,
\end{equation}
as  can be proved using the 
 the Cauchy residue theorem. 
 
Now since $X^{\ZZ_p}= \hat{\Sigma}$ has  been assumed to be purely of codimension $2$, 
$M=X/\ZZ_p$ is a manifold, and comes equipped with a surface
$\Sigma\subset M$ which is  the image of $\hat{\Sigma}$. Observe,
moreover,   that $[\Sigma]^2  = p [\hat{\Sigma}]^2$. 
Substituting (\ref{reside}) into (\ref{resto}) therefore yields 
\begin{equation}
\label{rest2}
\tau (M)  -\frac{1}{3} (1-p^{-2}) [\Sigma ]^2=  \frac{1}{p}\tau (X)~.
\end{equation}
However, the usual signature theorem tells us that 
$\tau (X) = \frac{1}{3} \int_X p_1 (TX)$, and this allows us
to rewrite  the right-hand side of (\ref{rest2})  as 
$$
\frac{1}{p}\cdot \frac{1}{12\pi^2}\int_X \left(
  |W_+|^2 - |W_-|^2
 \right)_{\hat{g}} d\mu_{\hat{g}} 
 = \frac{1}{12\pi^2}\int_M \left(
  |W_+|^2 - |W_-|^2
 \right)_g d\mu_g
$$
because $(X-\hat{\Sigma},\hat{g})$ is a $p$-sheeted cover of $(M-\Sigma, g)$. 
Since $g$ is an edge-cone metric on $(M,\Sigma)$ with $\beta = 1/p$, 
we have therefore obtained a quite different proof of  (\ref{prop2}) in this special case. 

The global quotients  with $\beta = 1/p$ we have just analyzed  
 constitute  a special class of 
orbifolds. Without recourse to the results in the previous section,
one can similarly show that Theorem \ref{thing2} also applies 
to arbitrary  orbifolds with singular set of pure codimension $2$ and cone angle $2\pi/p$, 
 even when the spaces in question are not  global quotients. 
 As is explained  in 
 Appendix \ref{apps}, the 
  best way  of proving this 
involves the  Index Theorem for transversely elliptic operators, and 
also yields results in higher dimensions. 
We will now   assume  this more general fact,and see how it leads to a 
different  proof of Theorem \ref{thing2}.  

To do so, we  revisit  the global quotients discussed above, but now turn the 
picture upside down by  letting  
$M$ play the role previously assigned to $X$. That is, we now assume that 
there is an effective action of $\ZZ_q$ on $M$ with fixed point set 
$\Sigma$, and set $Y= M/\ZZ_q$. Let $\varpi : M\to Y$
be the quotient map, and let 
$\check{\Sigma} = \varpi (\Sigma )$.  
 Chose an  orbifold metric  $\check{g}$ 
  of cone angle $2\pi /p$ on $(Y,\check{\Sigma})$, and assume, per the 
  above discussion,  that 
 \eqref{prop2} is  already 
  known to hold for orbifolds. Our previous argument tells us that 
 \begin{equation*}
\tau (Y)  =\frac{1}{q}\tau (M)  + \frac{1}{3}  (1-q^{-2}) ~ [\check{\Sigma}]^2
\end{equation*}
while we also have  the formula 
  \begin{equation*}
\tau (Y)  = \frac{1}{3} (1-p^{-2}) [\check{\Sigma} ]^2  +  \frac{1}{12\pi^2}\int_Y \left(
  |W_+|^2 - |W_-|^2
 \right)_{\check{g}} d\mu_{\check{g}}
\end{equation*}
by assumption. 
Remembering that $[\check{\Sigma}]^2 = q[\Sigma ]^2$, we therefore obtain  
   \begin{equation*}
\tau (Y) - \frac{1}{3} \left(1-\left[\frac{q}{p}\right]^2\right) [{\Sigma} ]^2  =   q\cdot   \frac{1}{12\pi^2}\int_Y \left(
  |W_+|^2 - |W_-|^2
 \right)_{\check{g}} d\mu_{\check{g}}
\end{equation*}
by straightforward algebraic manipulation. 
Reinterpreting the right-hand side as a  curvature integral on $M$ for the 
 edge cone metric $g=\varpi^*\check{g}$, we thus deduce 
 \eqref{prop2} for this large class of   examples
 where $\beta=q/p$ is an arbitrary positive rational number. 
 
 We now consider the general case of 
 Theorem \ref{thing2}
 with $\beta=q/p$ rational.  First,  notice that 
the same elementary trick used in \S \ref{curvy} shows that the
the $\beta$-dependent correction term in  \eqref{prop2}
can be localized to a neighborhood of $\Sigma$. Moreover,
 this correction term is additive
under disjoint unions, and multiplicative under covers. 
 Moreover, Lemma
\ref{pram} shows that they are independent of the particular choice of metric,
so we may assume that the given edge-cone metric of cone angle $2\pi q/p$ 
is rotationally invariant 
 on a tubular neighborhood of $\Sigma$. 
  Cut out such a tubular neighborhood $\mathcal{U}$ 
 of $\Sigma$, and consider the isometric  $\ZZ_q$-action corresponding to 
 rotation in the normal bundle of $\Sigma$ though an angle  of $2\pi/q$.  
  We then have an induced {\em free} action on the oriented $3$-manifold  
$\partial \mathcal{U}$. However, the cobordism group for free $\ZZ_q$-actions is of
finite order in any odd dimension \cite{conflo}. Thus, there is an
 oriented $4$-manifold-with-boundary $Z$ with free $\ZZ_q$-action,
 where $\partial Z$ is 
 disjoint union of, say,  $\ell$ copies of  $\partial \mathcal{U}$, each equipped with the original
  $\ZZ_q$-action. Let $\widetilde{M}$ then be obtained from the reverse-oriented manifold
  $\overline{Z}$ by capping off each of its $\ell$ boundary components with a copy of
  $\mathcal{U}$, and notice that, by construction, $\widetilde{M}$ comes equipped
  with a  $\ZZ_q$-action whose fixed-point set consists of $\ell$ copies of $\Sigma$. 
  We extend $\ell$ copies of the given edge-cone metric $g$ on $\mathcal{U}$ to a 
  $\ZZ_q$-invariant metric $\tilde{g}$ on   $\widetilde{M}$. However, 
  $Y=\tilde{M}/\ZZ_q$ is now a manifold, and $\tilde{g}$ pushes down 
to $Y$ as an orbifold metric $\check{g}$ of cone angle $2\pi/p$.
Our previous argument for global quotients then shows that 
\eqref{prop1} and \eqref{prop2} hold for $(Z,\coprod_1^\ell\Sigma)$,
and additivity therefore shows that the correction terms are as promised
for each of the $\ell$  identical copies of $\mathcal{U}$. This shows  that 
\eqref{prop2} holds for any edge-cone manifold of cone angle $2\pi\beta$, provided 
that $\beta$ is a positive rational number $q/p$. Multiplicativity
of the correction 
under covers similarly allows one to drop the assumption that $\Sigma$ is orientable.

Finally, an  elementary  continuity argument  allows us to extend our formula from 
rational to real $\beta$. Consider a smooth family
of edge-cone metrics on $(M, \Sigma)$ with $\mathbf{SO}(2)$ symmetry about $\Sigma$, but with 
cone angle varying over  the entire positive reals $\RR^+$. 
 The left- and right-hand sides of  \eqref{prop2} then vary
continuously as $\beta$ varies, and their difference vanishes for 
$\beta \in   \RR^+ \cap \QQ$. By continuity, the two sides  therefore agree for all $\beta > 0$. 
With the help of  Lemma  \ref{pram}, this gives us  alternative proofs of 
Theorem \ref{thing2} and Proposition \ref{thing3}. 


\pagebreak 
 
\section{Edges and Einstein  Metrics} \label{hti} 

As we saw in \S \ref{curvy}, Theorems \ref{thing1} and \ref{thing2} are equivalent to 
the fact that every edge-cone metric $g$ of cone angle $2\pi \beta$ on $(M,\Sigma)$ satisfies 
\begin{equation}
\label{dacapo}
(2\chi \pm 3\tau) (M) -  \dft_\pm(\Sigma, \beta) 
=  \frac{1}{4\pi^2}\int_M \left[
 \frac{s^2}{24} + 2|W_\pm|^2  -\frac{|\mathring{r}|^2}{2}
 \right]_g d\mu_g
\end{equation}
for both choices of the $\pm$ sign, where
$$
\dft_\pm (\Sigma,\beta) = 2 (1-\beta)\chi(\Sigma) \pm (1-\beta^2 ) [\Sigma ]^2~.
$$
However, if the edge-cone metric $g$ is Einstein, it then satisfies $\mathring{r}\equiv 0$,
and  the integrand on the right-hand-side of (\ref{dacapo}) is consequently non-negative.
The existence of an Einstein edge-cone metric of cone angle $2\pi \beta$ on
$(M,\Sigma)$ therefore implies the topological constraints
that 
$$
(2\chi \pm 3\tau) (M) \geq  \dft_\pm(\Sigma, \beta) 
$$
and equality can occur for a given choice of sign only if the Einstein metric
satisfies $s\equiv 0$  and $W_\pm\equiv 0$. Since the metric
in question also satisfies $\mathring{r}\equiv 0$ by hypothesis, equality only occurs if 
$\Lambda^\pm$ is flat, which is to say that the metric in question 
is locally hyper-K\"ahler, in a manner compatible with the $\pm$-orientation of $M$. 
This proves a  more refined version of Theorem \ref{oomph}:

\begin{thm} \label{capstone}
Let $M$ be a smooth compact $4$-manifold, and let $\Sigma \subset M$ be a
 compact orientable embedded surface. If $(M,\Sigma)$ admits
an Einstein edge-cone metric of cone angle $2\pi \beta$,
then $(M,\Sigma )$ must satisfy the  inequalities 
\begin{equation}
\label{oompa}
(2\chi + 3\tau) (M) \geq (1-\beta ) \left[ 2\chi (\Sigma)  + (1+\beta ) [\Sigma]^2 \right]
\end{equation}
and 
\begin{equation}
\label{loompa}
(2\chi - 3\tau) (M) \geq (1-\beta ) \left[ 2\chi (\Sigma)  - (1+\beta ) [\Sigma]^2 \right]~.
\end{equation}
Moreover, equality occurs in (\ref{oompa}) if and only if $g$ is locally hyper-K\"ahler, 
in a manner compatible with the given orientation of $M$. Similarly, 
 equality occurs in (\ref{loompa}) if and only if $g$ is locally hyper-K\"ahler, 
in a manner compatible with the opposite orientation of $M$. 
\end{thm}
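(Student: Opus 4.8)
The plan is to read off both inequalities directly from the edge-cone Gauss--Bonnet and signature formulae already established, and then to analyze the equality case through the curvature of the bundles $\Lambda^\pm$. I would start from the combined identity (\ref{dacapo}), valid for \emph{every} edge-cone metric $g$ of cone angle $2\pi\beta$ on $(M,\Sigma)$, namely
\begin{equation*}
(2\chi\pm3\tau)(M)-\dft_\pm(\Sigma,\beta)=\frac{1}{4\pi^2}\int_M\left[\frac{s^2}{24}+2|W_\pm|^2-\frac{|\mathring{r}|^2}{2}\right]_g\,d\mu_g,
\end{equation*}
with $\dft_\pm(\Sigma,\beta)=2(1-\beta)\chi(\Sigma)\pm(1-\beta^2)[\Sigma]^2$. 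Because this holds for an arbitrary edge-cone metric, Lemma \ref{pram} and all issues at the singular locus are already absorbed, and the proof reduces to inspecting the integrand.

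The inequalities are then immediate. Imposing the Einstein hypothesis gives $\mathring{r}\equiv0$ on $M-\Sigma$, so the only potentially negative term $-|\mathring{r}|^2/2$ disappears and the integrand becomes $\frac{s^2}{24}+2|W_\pm|^2\geq0$ for each sign. Hence the right-hand side is non-negative, which forces $(2\chi\pm3\tau)(M)\geq\dft_\pm(\Sigma,\beta)$; substituting the defect formula and factoring out $(1-\beta)$ produces exactly (\ref{oompa}) and (\ref{loompa}).

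For the equality statement I would argue that equality for a chosen sign is equivalent to the vanishing of the corresponding integral. Since the integrand is non-negative and $g$ is smooth on the dense open set $M-\Sigma$, this forces the pointwise identities $s\equiv0$ and $W_\pm\equiv0$ there, which together with $\mathring{r}\equiv0$ leaves all three irreducible curvature pieces zero. The main step---and the only one requiring genuine input---is to recognize, via the standard block form of the curvature operator $\mathcal{R}:\Lambda^2\to\Lambda^2$, that the curvature of the bundle $\Lambda^\pm$ is governed precisely by $W_\pm+\frac{s}{12}$ together with $\mathring{r}$; thus the simultaneous vanishing of these invariants is equivalent to $\Lambda^\pm$ being flat. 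A flat $\Lambda^+$ (respectively $\Lambda^-$) yields a locally parallel hyper-K\"ahler triple, so $g$ is locally hyper-K\"ahler in the given (respectively opposite) orientation, which is the asserted equality condition. I expect the flatness-of-$\Lambda^\pm$ identification, and checking that it persists across all of $M-\Sigma$, to be the only real obstacle; the inequalities themselves are a pure sign inspection.
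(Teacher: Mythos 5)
Your proposal is correct and follows essentially the same route as the paper: specialize the identity \eqref{dacapo} to the Einstein case so that $\mathring{r}\equiv 0$ makes the integrand non-negative, read off the two inequalities, and characterize equality by the vanishing of $s$ and $W_\pm$, which together with $\mathring{r}\equiv 0$ forces $\Lambda^\pm$ to be flat and hence $g$ to be locally hyper-K\"ahler in the appropriate orientation. No gaps to report.
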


Recall  that
a Riemannian 
$4$-manifold  is locally hyper-K\"ahler iff it
is Ricci-flat and locally K\"ahler. Brendle's recent construction \cite{brendedge} 
of Ricci-flat K\"ahler manifolds with edge-cone singularities of small cone angle
thus provides an interesting class of examples which saturate inequality (\ref{oompa}). 

As a related illustration of the meaning of Theorem \ref{capstone}, let us now consider
what happens when the cone angle tends to zero.

\begin{cor}
\label{chorus}
If $(M,\Sigma)$ admits
a sequence $g_j$ of  Einstein edge-cone metrics  with cone angles $2\pi \beta_j\to 0$, 
then $(M,\Sigma )$ must satisfy the two inequalities 
$$
(2\chi \pm 3\tau) (M) \geq  2\chi (\Sigma)  \pm  [\Sigma]^2 
$$
with equality for a given sign iff the $L^2$ norms of both $s$ and $W_\pm$   tend to zero
as $j\to \infty$.
\end{cor}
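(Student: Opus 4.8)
The plan is to obtain the corollary as a soft limiting case of Theorem \ref{capstone}. First I would apply the inequalities \eqref{oompa} and \eqref{loompa} to each metric $g_j$ in the sequence, which gives
\[
(2\chi \pm 3\tau)(M) \geq (1-\beta_j)\left[ 2\chi(\Sigma) \pm (1+\beta_j)[\Sigma]^2 \right]
\]
for every $j$ and each choice of sign. The left-hand side is a fixed topological invariant of $M$, independent of $j$, whereas the right-hand side depends on $j$ only through $\beta_j$. Letting $j \to \infty$ so that $\beta_j \to 0$, the right-hand side converges to $2\chi(\Sigma) \pm [\Sigma]^2$, and passing to the limit yields the asserted inequality $(2\chi \pm 3\tau)(M) \geq 2\chi(\Sigma) \pm [\Sigma]^2$ immediately.

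For the equality discussion I would instead return to the exact identity \eqref{dacapo}. Since each $g_j$ is Einstein, it satisfies $\mathring{r} \equiv 0$, so \eqref{dacapo} reduces to
\[
(2\chi \pm 3\tau)(M) - \dft_\pm(\Sigma, \beta_j) = \frac{1}{4\pi^2}\int_M \left[ \frac{s^2}{24} + 2|W_\pm|^2 \right]_{g_j} d\mu_{g_j}.
\]
As $j \to \infty$ the defect $\dft_\pm(\Sigma, \beta_j) = 2(1-\beta_j)\chi(\Sigma) \pm (1-\beta_j^2)[\Sigma]^2$ tends to $2\chi(\Sigma) \pm [\Sigma]^2$, so the curvature integral on the right converges to the definite value $(2\chi \pm 3\tau)(M) - \left( 2\chi(\Sigma) \pm [\Sigma]^2 \right)$, which is precisely the non-negative gap in the inequality just established.

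Equality in the corollary for a given sign therefore holds exactly when this limiting integral vanishes. Because the integrand $\tfrac{s^2}{24} + 2|W_\pm|^2$ is a sum of two non-negative terms, its integral tends to zero if and only if $\int_M s^2\, d\mu_{g_j} \to 0$ and $\int_M |W_\pm|^2\, d\mu_{g_j} \to 0$ separately, which is exactly the statement that the $L^2$ norms of both $s$ and $W_\pm$ tend to zero. I expect no serious obstacle here, since the argument is purely a limiting passage built on Theorem \ref{capstone} and the exact formula \eqref{dacapo}; the only point demanding mild care is noting that the sequence of curvature integrals converges to a \emph{definite} value (computed above), so that equality in the limiting inequality is genuinely equivalent to the vanishing of the limit of these integrals, and hence to the separate vanishing of the two $L^2$ norms.
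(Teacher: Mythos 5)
Your argument is correct and is precisely the intended derivation: the paper states Corollary \ref{chorus} as an immediate consequence of Theorem \ref{capstone}, obtained by passing to the limit $\beta_j\to 0$ in \eqref{oompa} and \eqref{loompa}, with the equality discussion coming from the exact identity \eqref{dacapo} together with $\mathring{r}\equiv 0$ and the non-negativity of the remaining integrand. Your added remark that the curvature integrals converge to the definite value of the gap, so that equality is genuinely equivalent to the vanishing of both $L^2$ norms, is exactly the point that makes the ``iff'' work.
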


For example, let $\Sigma \subset \CP_2$ be a smooth cubic curve, and 
let $M$ be obtained from $\CP_2$ by blowing up $k$ points
which do not lie on $\Sigma$. Considering  $\Sigma$ as 
a submanifold of $M$, we always have $2\chi(\Sigma ) +[\Sigma ]^2 =0+3^2=9$, whereas 
\linebreak 
$(2\chi + 3\tau )(M) = (2\chi + 3\tau )(\CP_2) -k=9-k$. Thus, 
$(M,\Sigma )$ does not admit Einstein metrics of small cone angle 
when $k$ is a positive integer. By contrast, 
\cite{jmredge} leads one to believe that $(\CP_2, \Sigma)$  {\em should} 
admit  Einstein metrics of small cone angle, and that, 
 as the angle tends to zero, these should  tend to 
previously discovered hyper-K\"ahler metrics \cite{bkhedge,tyhedge}.
This would nicely illustrate the boundary case of Corollary \ref{chorus}.

While many interesting results have recently been obtained 
 about the K\"ahler case of Einstein
edge metrics with $\beta \in (0,1]$, the large cone-angle regime of the problem seems
technically 
intractable. It is thus interesting to observe  Theorem \ref{capstone} gives us strong
obstructions to the existence of Einstein edge-cone metrics with large cone angle, 
even without imposing the K\"ahler condition. Indeed, first notice that 
dividing (\ref{oompa}) and (\ref{loompa}) by $\beta^2$ and taking the limit as 
$\beta \to \infty$ yields the inequalities 
$$ 0 \geq - [\Sigma ]^2 \mbox{ and }  0 \geq  [\Sigma ]^2 $$
so that existence is obstructed for large $\beta$ unless $[\Sigma ]^2 =0$.
Similarly, dividing the sum of (\ref{oompa}) and (\ref{loompa}) by $4\beta$
and letting $\beta \to \infty$  yields
$$ 0 \geq - \chi (\Sigma )$$
so existence for large $\beta$ is obstructed  in most cases:

\begin{cor}
\label{chorale}
Suppose that $\Sigma \subset M$ is a
 connected 
 oriented surface  with  either  non-zero self-intersection or
 genus $\geq 2$. Then there is a real number $\beta_0$ such that
$(M,\Sigma )$ does not admit Einstein edge-cone metrics of cone angle 
$2\pi \beta$ for any $\beta \geq \beta_0$. 
\end{cor}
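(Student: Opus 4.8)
The plan is to obtain the non-existence directly from the contrapositive of Theorem \ref{capstone}, by comparing the fixed topological numbers on the left-hand sides of \eqref{oompa} and \eqref{loompa} with the right-hand sides, which are explicit polynomials in $\beta$. The guiding observation is that $(2\chi+3\tau)(M)$ and $(2\chi-3\tau)(M)$ are constants determined by $M$ alone, whereas at least one of the two right-hand sides diverges to $+\infty$ as $\beta\to\infty$ under the stated hypotheses. Once a right-hand side exceeds the matching constant, the corresponding inequality of Theorem \ref{capstone} fails, and so no Einstein edge-cone metric of that cone angle can exist.

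Concretely, I would first rewrite the right-hand sides as quadratics in $\beta$. Expanding \eqref{oompa} gives
$$(1-\beta)\left[2\chi(\Sigma)+(1+\beta)[\Sigma]^2\right]= -[\Sigma]^2\,\beta^2 - 2\chi(\Sigma)\,\beta + \left(2\chi(\Sigma)+[\Sigma]^2\right),$$
while expanding \eqref{loompa} gives
$$(1-\beta)\left[2\chi(\Sigma)-(1+\beta)[\Sigma]^2\right]= [\Sigma]^2\,\beta^2 - 2\chi(\Sigma)\,\beta + \left(2\chi(\Sigma)-[\Sigma]^2\right).$$
Thus the two right-hand sides have leading coefficients $-[\Sigma]^2$ and $+[\Sigma]^2$, respectively, and differ only in the sign of the $[\Sigma]^2$ contributions.

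The argument then splits along the disjunction in the hypothesis. If $[\Sigma]^2\neq 0$, then exactly one of the leading coefficients $\mp[\Sigma]^2$ is strictly positive, so the associated right-hand side tends to $+\infty$; since the matching left-hand side is a fixed constant, that inequality is violated for all sufficiently large $\beta$. If instead $[\Sigma]^2=0$, then the genus $\geq 2$ alternative of the hypothesis must hold, and both right-hand sides collapse to the linear expression $2(1-\beta)\chi(\Sigma)$; because $\Sigma$ is connected with genus $g\geq 2$ we have $\chi(\Sigma)=2-2g\le -2<0$, so this expression again diverges to $+\infty$ and both inequalities eventually fail. In every case there is a threshold $\beta_0$ beyond which at least one of \eqref{oompa}, \eqref{loompa} is violated, which is precisely the desired conclusion.

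There is no genuine analytic obstacle here, since the whole argument reduces to comparing a constant with a polynomial that diverges to $+\infty$; the only point demanding a little care is the bookkeeping when $[\Sigma]^2=0$, where the divergence must be extracted from the genus hypothesis via $\chi(\Sigma)<0$ rather than from the self-intersection, and where connectedness is what lets one identify \emph{genus} $\geq 2$ with $\chi(\Sigma)<0$.
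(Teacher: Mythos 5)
Your proposal is correct and follows essentially the same route as the paper: the paper likewise extracts the divergence of the right-hand sides of \eqref{oompa} and \eqref{loompa} as $\beta\to\infty$ (phrased there as dividing by $\beta^2$ to isolate the $\mp[\Sigma]^2$ leading term, and dividing the sum by $4\beta$ to isolate $-\chi(\Sigma)$), and concludes by comparison with the fixed constants $(2\chi\pm3\tau)(M)$. Your explicit expansion of the quadratics and the case split on $[\Sigma]^2=0$ is just a mild repackaging of that same asymptotic argument.
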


Of course, 
this result  does not provide obstructions in all cases, and examples show that
this is  inevitable. For example, consider an equatorial $2$-sphere 
 $S^2\subset S^4$. If $S^4$ is thought of as the unit sphere in  $\RR^5 = \RR^3\times \RR^2$, we make take our $S^2$ to be the inverse image of the origin under projection to 
 $\RR^2$. By taking polar coordinates on this $\RR^2$, we can thus identify $S^4-S^2$ with 
$B^3\times S^1$. Rotating in the circle factor then gives us a Killing field on $S^4$, and there 
is  an interesting  conformal rescaling of the standard metric obtained by requiring
that this conformal Killing field  have unit length in the new metric. What we 
obtain in this way is a conformal equivalence between $S^4-S^2$
and the Riemannian product $\mathcal{H}^3\times S^1$. More precisely, 
the standard metric on $S^4$ now becomes
$$(\sech^2 \rad )[h +  d\theta^2 ]$$
where $h$ is the standard curvature $-1$ metric on  $\mathcal{H}^3$, and where 
$\rad : \mathcal{H}^3\to \RR$ is the distance in $\mathcal{H}^3$ from some arbitrary 
base-point. By a minor alteration, we then obtain the family
$$g= (\sech^2 \rad )[h + \beta^2 d\theta^2 ]$$
of edge-cone metrics on $S^4$ with arbitrary cone angle $2\pi\beta$. By setting
$\tilde{\theta}= \beta\theta$, we see that these edge-cone metrics are actually  locally
isometric to the standard metric on $S^4$, and so, in particular, are all Einstein;
in other words, these edge-cone metrics are simply obtained by passing to the universal cover 
$B^3\times \RR$ of $S^4-S^2$, and then dividing out by some arbitrary translation of
the $\RR$ factor. 
 Since this works for any  $\beta >0$,  we see that it is inevitable that 
 Corollary \ref{chorale} does not apply to genus zero surfaces of trivial self-intersection.

The above edge-cone metrics $g$ can be obtained from the family 
$$g_0= \beta^{-1} h + \beta~d\theta^2$$
by  a suitable conformal rescaling. In the next section, 
we will see that this can be interestingly generalized by replacing the
constant function $V=\beta^{-1}$ with a harmonic function, and 
by replacing the flat circle bundle ${\mathcal H}^3\times S^1$ with 
a principal ${\mathbf U}(1)$-bundle over ${\mathcal H}^3$ which is  equipped  with a connection whose curvature is the closed $2$-form 
$\star dV$.

  \pagebreak 
   
\section{Edges and Instantons} \label{zoo}

In this section, we will study families of   self-dual edge-cone metrics
on  $4$-manifolds,   and observe that these metrics 
are interestingly related to certain  gravitational instantons.
For our purposes, a gravitational instanton will mean a complete  non-compact
Ricci-flat Riemannian $4$-manifold which is both simply connected and self-dual, 
in the sense that 
$W_-=0$. Note that such spaces are necessarily hyper-K\"ahler, but that the orientation
we will give them here is opposite the one induced by the hyper-K\"ahler structure. 

As indicated at the end of \S \ref{hti}, we will 
 begin by considering a  construction  \cite{mcp2}, called the {\em hyperbolic ansatz}, 
that builds explicit self-dual conformal metrics out of positive harmonic functions on 
regions of hyperbolic $3$-space. To this end, let 
${\mathcal U}\subset {\mathcal H}^3$ be an open set in hyperbolic $3$-space,
and let $V: {\mathcal U}\to \RR^+$ be a function which is harmonic with respect
to the hyperbolic metric $h$. The $2$-form $\star dV$ is then closed, and
we will furthermore suppose that the deRham class $[(\star dV)/2\pi]$
represents an integer class in $H^2({\mathcal U}, \RR)$. The  theory of Chern classes 
then guarantees that there is a principal $\mathbf{U} (1)$-bundle ${\mathcal P}\to {\mathcal U}$ 
which carries connection $1$-form $\Theta$ of curvature
$d\Theta = \star dV$.  We may then consider the Riemannian metric 
\begin{equation}
\label{ansatz}
g_0= Vh+ V^{-1}\Theta^2
\end{equation}
on the total space $\mathcal P$ of our circle bundle. Remarkably, this metric
is automatically self-dual with respect to a standard orientation of ${\mathcal P}$. 
Since this last condition, that $W_-=0$, is conformally invariant,  multiplying 
 $g_0$ by any positive conformal factor will result in  another self-dual metric. 
 For shrewd choices of $V$ and the conformal factor,  interesting
compact self-dual edge-cone manifolds can be constructed in this way. Indeed, 
 one can even  sometimes arrange for  the resulting edge-cone metric to also be  Einstein. 
 
 We already considered the case of constant $V$ at the  end of  \S \ref{hti}. 
To obtain something more interesting, we now  choose our  potential to be  
\begin{equation}
\label{choice} 
V= \beta^{-1} + \sum_{j=1}^n G_{p_j}
\end{equation}
where  $\beta$ is an arbitrary positive constant, 
 $p_1, \ldots , p_n\in {\mathcal H}^3$ are distinct points in hyperbolic $3$-space,
and where $G_{p_j}$ are the corresponding Green's functions. 
For simplicity, we will use the same  conformal rescaling 
$$g= \beta (\sech^2 \rad ) g_0$$
that was used in \S \ref{hti}, where $\rad$ denotes the distance from some arbitrary
base-point in $\mathcal{H}^3$. The metric-space completion 
$M=\mathcal{P}\cup \Sigma\cup \{ \hat{p}_j\}$
of 
$(\mathcal{P},g)$ then carries a natural smooth structure making it diffeomorphic to 
the connected sum 
$$n\CP_2= \underbrace{\CP_2 \# \cdots \# \CP_2}_n$$
and  
$g$ then extends to $M$ 
as an edge-cone metric 
with cone angle $2\pi\beta$ along a surface $\Sigma \approx S^2$
of self-intersection $n$. Indeed,  $\beta=1$, 
this exactly reproduces the  self-dual metics on $n\CP_2$ constructed  in \cite{mcp2}.
For general $\beta$, the picture is essentially the same; metric-space completion 
adds one point $\hat{p}_j$ for each of the base-points $p_j$, and a $2$-sphere $\Sigma$ corresponding
to the $2$-sphere at infinity of $\mathcal{H}^3$. By the same argument used in 
 \cite[p. 232]{mcp2},  the metric $g$ extends smoothly across
the $\hat{p}_j$. By contrast,  we obtain an edge-cone metric of cone-angle $\beta$ along 
$\Sigma$,  because our potential $V$ is asymptotic to the constant choice considered
in \S \ref{hti}.  
Since  these edge-cone metrics satisfy 
$W_-=0$, Theorem \ref{thing2}  tells us that 
they also satisfy
\begin{equation}
\label{zest}
\frac{1}{12\pi^2}\int_{n\CP_2} |W_+|^2d\mu = \tau (n\CP_2) - \frac{1}{3}(1-\beta^{2}) 
[\Sigma ]^2=
\frac{n(2+\beta^2)}{3}~.
\end{equation}

The  $n=1$  case has some special features that make it  particularly interesting. The potential 
becomes 
\begin{equation}
\label{pot}
V= \beta^{-1} + \frac{1}{e^{2\rad}-1}
\end{equation}
and  \eqref{ansatz} can then  be written explicitly as 
\begin{equation}
\label{voila}
g_0= V \left[ d\rad^2 + (4\sinh^2 \rad) (\sigma_1^2 + \sigma_2^2)\right] + 
 V^{-1} \sigma_3^2
\end{equation}
where  $\rad$ represents the hyperbolic distance from the point 
$p=p_1$, and $\{\sigma_j \}$ is a left-invariant orthonormal co-frame
for $S^3 = \mathbf{SU}(2)$. 
Remarkably, the alternative representative 
\begin{equation}
\label{recipe}
\tilde{g}= 4\beta^{-1} [(2-\beta)  \cosh \rad +\beta  \sinh \rad ]^{-2}g_0
\end{equation}
of the conformal class is then Einstein, with Einstein constant $\lambda= \frac{3}{2}\beta^2 (2-\beta)$,
as  follows   from  \cite[Equation (2.1)]{shin} or \cite[\S 9]{hitpain}. 
Demanding that  $g$  define a metric for all $\rad\geq 0$ imposes the 
constraint\footnote{
By contrast, when $\beta > 2$, restricting  (\ref{recipe})  to the  $4$-ball  
 $\rad < \tanh^{-1}(1-2/\beta)$ 
   produces a family of complete self-dual Einstein metrics originally discovered by 
Pedersen \cite{henrik}.} 
 that $\beta < 2$.  
 The resulting family of edge-cone metrics on $(\CP_2, \CP_1)$
 of cone angle $2\pi \beta$,  $\beta\in (0,2)$,     coincides with the 
family constructed  by Abreu \cite[\S 5]{abreu} by  an entirely different method. 
Notice that, as a special case of \eqref{zest}, these metrics satisfy  
\begin{equation}
\label{arcturus}
\frac{1}{12\pi^2} \int_{\CP_2} |W_+|^2d\mu = 
\frac{2+\beta^2}{3}~
\end{equation}
whether we represent the conformal class by $g$ or,  when $\beta\in (0,2)$, by $\tilde{g}$. 

When $\beta=1$, $\tilde{g}$  is just the standard Fubini-Study metric
on $\CP_2$. On the other hand, the  $\beta \to 0$ and  $\beta \to 2$ limits 
of $\tilde{g}$ give us  
other celebrated metrics. For example, introducing the new radial coordinate
 $\mathfrak{r}=\sqrt{\coth \rad}$, 
 \begin{equation}
\label{gucci}
\lim_{\beta\to 2}  \tilde{g}= \frac{d{\mathfrak{r}}^2}{1-{\mathfrak{r}}^{-4}}+ {\mathfrak{r}}^2
\left[ \sigma_1^2+\sigma_2^2 + (1-{\mathfrak{r}}^{-4})\sigma_3^2
\right]
\end{equation}
which is the usual formula for the  Eguchi-Hanson metric \cite{eh,gibhawk}, a celebrated
complete self-dual Einstein metric on the manifold $TS^2$;  however, 
 (\ref{gucci})  has arisen here as a metric on $\mathbf{SU}(2)\times (1,\infty )$
rather than on  $\mathbf{SO}(3)\times (1,\infty )$, so this  version of Eguchi-Hanson is actually
actually a branched double cover of the usual one, ramified along the zero section 
of $TS^2$. 
On the other hand, after introducing  a new radial coordinate $\mathfrak{r}= \beta^{-1}\rad$,
the point-wise coordinate limit 
$$
\lim_{\beta\to 0}\tilde{g}= \left(1+\frac{1}{2\mathfrak{r}}\right)
\left[d\mathfrak{r}^2+ 4 \mathfrak{r}^2(\sigma_1^2+\sigma_2^2)\right]+ 
\left(1+\frac{1}{2\mathfrak{r}}\right)^{-1}\sigma_3^2
$$
 is  the  Taub-NUT metric \cite{gibhawk,lebnut},
a complete non-flat hyper-K\"ahler metric on $\RR^4$. 
Similarly, by 
choosing suitable sequence of centers $\{p_j\}$ and conformal rescalings,
there are $\beta\to 0$ limits of our conformal metrics on $n\CP_2$ which converge to 
\begin{equation}
\label{multitn}
g_{\rm{multi}} = \tilde{V} \mathbf{dx}^2 + \tilde{V}^{-1}\tilde{\Theta}^2
\end{equation}
where $\mathbf{dx}^2$ is the Euclidean metric on $\RR^3$, the harmonic function 
$$
\tilde{V} = 1+ \sum_{j=1}^n \frac{1}{2\mathfrak{r}_j}
$$
is expressed in terms of the Euclidean distances $\mathfrak{r}_j$
from the $\tilde{p}_j$, and where $d\tilde{\Theta}= \star d\tilde{V}$. 
The metric \eqref{multitn}  is a famous gravitational instanton called  the 
multi-Taub-NUT metric \cite{gibhawk,lebnut,minerbe}.

If we are cavalier about interchanging integrals and limits, these obseravtions
provide some interesting  information regarding the above gravitational instantons.
For example,  the standard Eguchi-Hanson space 
 ${\mathbb{EH}}$ should  satisfy
$$\frac{1}{12\pi^2}\int_{\mathbb{EH}} |W_+|^2d\mu =\lim_{\beta\to 2^-}  \frac{1}{2}  \left(
\frac{2+\beta^2}{3}\right) = 1$$
with respect to the orientation for which $W_-=0$; 
here the factor of $1/2$ stems from the fact that $\mathbb{EH}$ is a 
$\ZZ_2$-quotient of 
$\CP_2 -\{ p \}$. 
Because   the $s$, $W_-$ and $\mathring{r}$ pieces of the curvature
 tensor ${\mathcal R}$  all vanish for $\mathbb{EH}$, the  $L^2$ norm squared of the 
 curvature  of the Eguchi-Hanson instanton should therefore be given by 
 \begin{equation}
\label{ehcurv}
\int_{\mathbb{EH}}|{\mathcal R}|^2 d\mu = \int_{\mathbb{EH}} |W_+|^2d\mu = 12\pi^2.
\end{equation}
Similarly, 
the Taub-NUT gravitational instanton $\mathbb{TN}$  should satisfy 
$$\frac{1}{12\pi^2}\int_{\mathbb{TN}} |W_+|^2d\mu = 
 \lim_{\beta\to 0^+}[\tau (\CP_2) - \frac{1}{3}(1-\beta^{2}) [\CP_1]^2]= \frac{2}{3}$$
when oriented so that  $W_-=0$. In particular, the  $L^2$ norm squared of the 
 curvature tensor  of the Taub-NUT space is given by 
 \begin{equation}
\label{tnutcurv}
\int_{\mathbb{TN}}|{\mathcal R}|^2 d\mu = \int_{\mathbb{TN}} |W_+|^2d\mu = 8\pi^2.
\end{equation}
Similar  thinking predicts that the $n$-center multi-Taub-NUT metric 
will have 
\begin{equation}
\label{nuttier}
\int |\mathcal{R}|^2 d\mu = \int |W_+|^2d\mu = 
{12\pi^2} \lim_{\beta\to 0^+}
\frac{n(2+\beta^2)}{3} = 8\pi^2n~.
\end{equation}
All of these interchanges of integrals and limits  can in fact  be  rigorously justified
by a careful application of the  the dominated convergence theorem.  
Rather than presenting  all  the tedious details, however, we will instead simply 
 double-check these answers later, using a more direct method.


The self-dual Einstein edge-cone metrics on $(\CP_2, \CP_1)$ given by 
(\ref{recipe}) 
are  invariant under the   action of $\mathbf{SU}(2)$ on $\CP_2= \CC^2\cup \CP_1$.
However, there is  a remarkable second family of self-dual Einstein edge-cone metrics
on $(\CP_2, \RP^2)$ which are invariant under a different action of
$\mathbf{SU}(2)$ on $\CP_2$, namely the action of 
$\mathbf{SO}(3)=\mathbf{SU}(2)/\ZZ_2$ on $\CP_2$ gotten by thinking
of it as the projective space of  $\CC  \otimes \RR^3$. 
These metrics, which were
discovered by Hitchin \cite{hitedge1,hitedge2,hitpain}. The starting point of Hitchin's investigation 
was a  reduction, due to Tod \cite{todpain},
 of the self-dual Einstein  equations with $\mathbf{SU}(2)$
symmetry to an ordinary differential equation belonging to the  Painlev\'e VI family.  
 He then finds a specific family of solutions depending on an integer $k\geq 3$
 which have the property that the corresponding twistor spaces are (typically singular) algebraic
 varieties, and then shows  \cite[Proposition 5]{hitedge1} that the resulting Einstein manifold compactifies as an edge-cone metric on $(\CP_2, \RP^2)$ of cone angle $4\pi /(k-2)$; he phrases this assertion 
 in terms of the 
 $\ZZ_2$-quotient of this metric by complex conjugation, which is then an orbifold metric on $S^4$ 
 with an edge-cone singularity of cone-angle $2\pi/(k-2)$ along  a Veronese $\RP^2\subset S^4$. 
 When 
 $\beta=1$, Hitchin's metric is just the Fubini-Study metric, while for $\beta=2$
 it is just a branched double cover of the standard metric on $S^4$. 
The corresponding solutions of Painlev\'e VI  can be explicitly expressed in terms 
of elliptic functions, and Hitchin  observes in a later paper  \cite[Remark 2, p. 79]{hitpain} 
that, in principle, 
  solutions for non-integer $k$ should also   give
 rise to  a self-dual Einstein edge-cone metrics on 
 $(\CP_2, \RP^2)$, although he does not try to determine precisely which  cone angles 
 $2\pi\beta$ 
 can 
 actually arise in this way. However, since $\RP^2\subset \CP_2$ is  totally real, 
 Proposition \ref{thing3} is perfectly adapted to the study of Hitchin's self-dual edge-cone 
 metrics, and tells
 us that they necessarily satisfy  
$$
 \int_{\CP_2} |W_+|^2 d\mu = 12\pi^2  [\tau (\CP_2) +\frac{1}{3} (1-\beta^2) \chi (\RP^2)] = 4\pi^2 (4-\beta^2) ~.
$$
 We thus see  that the constraint $\beta \leq 2$, corresponding to $k\geq 3$,
  is both natural and  unavoidable.

 The $\mathbf{SU}(2)$-action on $\CP_2$  which preserves Hitchin's metrics  
 has two $2$-dimensional orbits, namely the $\RP^2$ where the edge-cone singularity occurs, 
 and a conic $C\subset \CP_2$ where the metric is smooth; in the 
 $S^4= \CP_2/\ZZ_2$ model,
 $C$ projects to another  Veronese $\RP^2=C/\ZZ_2$. 
  Hitchin now normalizes his  metrics so that $C$ has area $\pi$ for each 
  $\beta$, and asks what happens when $\beta= 2/(k-2) \to 0$. He then shows \cite[Proposition 6]{hitedge1}
  that this 
  $k\to \infty$ limit is precisely the Atiyah-Hitchin gravitational instanton.
  Here we need to  be rather precise, because there are really two different versions of the
  Atiyah-Hitchin instanton.
  The better known version, which we shall call $\mathbb{AH}$, was constructed 
   \cite{ahso} as the moduli space $M^0_2$ of   solutions of the $\mathbf{SU}(2)$ Bogomolny 
monopole equations on $\RR^3$ with magnetic charge $2$  and 
fixed center of mass. With this convention, $\mathbb{AH}$ is diffeomorphic to a
tubular neighborhood of the Veronese $\RP^2$ in $S^4$, and so 
has  fundamental group $\ZZ_2$. Its universal cover $\widetilde{\mathbb{AH}}$ 
is therefore also a gravitational instanton, and is diffeomorphic to a tubular neighborhood
of a conic $C$ in $\CP_2$; thus, $\widetilde{\mathbb{AH}}$ is diffeomorphic   both to
$\CP_2-\RP^2$ and to the ${\mathcal O}(4)$ line bundle
over $\CP_1$. If we regard Htichin's metrics as edge-cone metrics on $(\CP_2,\RP^2)$,
 he then shows that they converge to 
$\widetilde{\mathbb{AH}}$ as $k\to\infty$. 
Interchanging limits and integration as before thus leads us to expect that 
$$\int_{\widetilde{\mathbb{AH}}}  |W_+|^2 d\mu = \lim_{\beta_k\to 0^+} 4\pi^2 (4-\beta_k^2) = 
16\pi^2$$
so that 
\begin{equation}
\label{ahtildecurv}
\int_{\widetilde{\mathbb{AH}}}  |\mathcal{R}|^2 d\mu = 16\pi^2~,
\end{equation}
and hence that 
\begin{equation}
\label{ahcurv}
\int_{{\mathbb{AH}}}  |\mathcal{R}|^2 d\mu = \frac{1}{2}\int_{\widetilde{\mathbb{AH}}}  |\mathcal{R}|^2 d\mu= 8\pi^2~.
\end{equation}
While this argument  can  again be made rigorous using the dominated convergence theorem, 
we will instead simply double-check these answers 
by a second
method which fits into a beautiful general pattern. 

So far, we have been using  the signature formula \eqref{prop2} to compute the 
$L^2$-norm of the  self-dual Weyl curvature for interesting edge-cone metrics, 
and then, by a limiting process,  have inferred the  $L^2$-norm of the Riemann curvature 
for various 
gravitational instantons, as $W_+$ is the only non-zero piece of the curvature 
tensor for such spaces. However, we could have instead proceeded by considering the Gauss-Bonnet
formula \eqref{prop1} for edge-cone metrics. In this case, we have 
$$
\lim_{\beta\to 0}\int_M\left[|\mathcal{R}|^2-|\mathring{r}|^2 \right]   d\mu 
= 8\pi^2 \lim_{\beta\to 0} \left[
\chi (M) -(1-\beta)\chi (\Sigma) 
\right] = 8\pi^2 \chi (M -\Sigma)~.
$$
Since the limit metric is Einstein, we thus expect any gravitational instanton
$(X,g_\infty)$ obtained as a $\beta\to 0$ limit to  satisfy
\begin{equation}
\label{pattern}
\int_{X}|\mathcal{R}|^2d\mu = 8\pi^2 \chi (X)
\end{equation}
because the underlying  manifold of the instanton is $X=M-\Sigma$.
For example, the simply-connected Atiyah-Hitchin space $\widetilde{\mathbb{AH}}$
deform-retracts to $S^2$, and so has Euler characteristic $2$; thus, 
its total squared curvature should,  as previously inferred,  be $16\pi^2$. 
Similarly, the Taub-NUT metric lives on the contractible space $\RR^4$, which has Euler
characteristic $1$, and so is expected to have $\int |\mathcal{R}|^2d\mu = 8\pi^2$, 
in agreement with our previous inference. 

These same answers can be obtained in a 
direct  and 
rigorous manner  by 
means of the Gauss-Bonnet theorem with boundary.
Indeed, if  $(Y,g)$ is any compact oriented Riemannian 
$4$-manifold-with-boundary, this result tells us 
\begin{equation}
\label{gbound}
\chi (Y) = \frac{1}{8\pi^2} \int_Y \Big( |{\mathcal R}|^2 - |\mathring{r}|^2\Big) \mu_g +  \frac{1}{4\pi^2}
 \int_{\partial Y}  \Big[ 2 \det (\gemini )+ \langle \gemini , \hat{\mathcal R}\rangle  \Big] d{\zap a}
\end{equation}
where $\gemini$ and $d{\zap a}$ are the
 the second fundamental form and  volume $3$-form of the boundary $\partial Y$, 
 is the of the boundary, and $\hat{\mathcal{R}}$ is the symmetric  tensor field on 
$\partial Y$  gotten by restricting the ambient curvature tensor $\mathcal{R}$
and then 
using the   $3$-dimensional  Hodge star operator
to  identify $\odot^2\Lambda^2$  and $\odot^2\Lambda^1$ on this $3$-manifold. 
One can prove \eqref{gbound} simply by following  Chern's  proof of the 
generalized Gauss-Bonnet theorem, using Stokes theorem to count
the zeroes, with multiplicities, of a generic vector field on $Y$  that is an outward pointing 
normal field along  $\partial Y$.  The proof in  \cite{chern} then goes through without changes, except that 
there is now a non-trivial contribution due to $\partial Y$. 

The best-known class of gravitational instantons consists of 
ALE (Asymptotically Locally Euclidean) spaces. For such a space $X$, there is 
a compact set $K$ such that $X-K$ is diffeomorphic to $(\RR^4-B)/\Gamma$
for some finite subgroup $\Gamma \subset \mathbf{SU}(2)$, in such a manner that 
the metric is given by 
$$
g_{jk}= \delta_{jk} + O(\radius^{-2})
$$
where $\radius$ is the Euclidean radius, with coordinate derivatives 
$\partial^kg$ commensurately falling off like $\radius^{-2-k}$.
In particular, a ball of radius $\radius$ has $4$-volume $\sim \radius^4$, 
and the Riemannian curvature falls off like  $|\mathcal{R}|\sim \radius^{-4}$. 
Moreover, the hypersurface $\radius=\text{const}$  has $3$-volume $\sim \radius^3$
and $|\gemini|\sim \radius^{-1}$.  
 If we let $Y\subset X$ be the region $\radius\leq C$ and then 
let $C\to \infty$, we thus see that the only significant boundary contribution 
in \eqref{gbound} comes from the $\det \gemini$  term. In the limit, 
 it is thus easy to show  that the boundary terms  just 
equals  $1/|\Gamma|$  times the corresponding integral for the
  standard $3$-sphere  $S^3\subset \RR^4$. 
Thus, any ALE instanton satisfies \cite{chenlebweb,kasue,nakajale}
\begin{equation}
\label{alecurv}
\int_X |\mathcal{R}|^2d\mu = 8\pi^2 \left(\chi(X) -\frac{1}{|\Gamma|}\right)~.
\end{equation}
For example, when $X$ is the Eguchi-Hanson instanton $\mathbb{EH}$,
$\chi = 2$ and $|\Gamma|=2$, so the total squared curvature is $12\pi^2$, 
as previously predicted by \eqref{ehcurv}.

The Taub-NUT space is the prototypical example of an asymptotically
locally flat (ALF) gravitational instanton. 
For such spaces, the volume of a large ball has $4$-volume $\sim \radius^{3}$,
while curvature falls off like $\radius^{-3}$. The hypersurface $\radius = \text{const}$ 
 has $3$-volume $\sim \radius^{2}$,  with $|\gemini|\sim \radius^{-1}$ and $\det \gemini\sim \radius^{-4}$. 
Thus the boundary contribution in \eqref{gbound} tends to zero as $\radius\to \infty$, and 
 any ALF instanton therefore satisfies   the simpler formula  \cite{daiwei,kasue} 
\begin{equation}
\label{alfcurv}
\int_X |\mathcal{R}|^2d\mu = 8\pi^2 \chi(X)
\end{equation}
previously seen  in \eqref{pattern}. 
For example, the Taub-NUT instanton has $\chi=1$, so its total squared curvature
$8\pi^2$, as predicted by \eqref{tnutcurv}. The Atiyah-Hitchin gravitational instanton is also
ALF; in fact, it is 
asymptotic to Taub-NUT with a negative NUT parameter \cite{ahso}. Thus $\widetilde{\mathbb{AH}}$, with an Euler characteristic of $2$, has total squared curvature $16\pi^2$,
while its $\ZZ_2$-quotient $\mathbb{AH}$, with an Euler characteristic of $1$, has 
has total squared curvature $8\pi^2$. Note that these conclusions coincide 
with the predictions of \eqref{ahtildecurv} and \eqref{ahcurv}.

A classification of complete hyper-K\"ahler ALE $4$-manifolds was given by 
Kronheimer \cite{kron,krontor}; 
up to deformation, they are in one-to-one correspondence with 
the Dynkin diagrams of type $A$, $D$, and $E$. For example, the 
Dynkin diagram $A_1$ corresponds to the
Eguchi-Hanson metric, and,   more generally, 
 the  Dynkin diagrams $A_k$ correspond to 
the multi-Eguchi-Hanson metrics independently  discovered by 
Hitchin \cite{hitpoly} and by Gibbons-Hawking \cite{gibhawk}. Each Dynkin diagram
represents a discrete subgroup $\Gamma$ of $\mathbf{SU}(2)$, and these groups are 
then realized as the fundamental group of the $3$-dimensional boundary 
 at infinity of the corresponding  $4$-manifold. 
 The diagram also elegantly  encodes the diffeotype of the corresponding $4$-manifold, 
 which is obtained by plumbing together copies of $TS^2$, with one $2$-sphere
 for each node, and with edges of the diagram indicating which   pairs of $2$-spheres 
 meet. In particular, the number $k$ of nodes in any given diagram is the second Betti number
 $b_2$ of the instanton, which consequently has $\chi=k+1$. 
 
 For each diagram of type $A$ or $D$, there is also an associated ALF instanton. 
 These ALF partners 
 are diffeomorphic to the corresponding ALE instantons, but their geometry at infinity 
 resembles Taub-NUT$/\Gamma$ instead of  a Euclidean
 quotient $\RR^4/\Gamma$.
 In the $A_k$ cases, these spaces are just the multi-Taub-NUT metrics of \eqref{multitn},
 with 
 $n=k+1$ centers;  our heuristic calculation \eqref{nuttier}
 of their  total squared curvature thus confirmed by \eqref{alfcurv}, since these spaces
 have  $\chi=n$.
  The $D_k$ metrics were constructed explicitly  
 by Cherkis and Hitchin \cite{cherhit}, building on earlier existence arguments 
  of Cherkis and Kapustin \cite{cherka}. The fact that these metrics really are
  ALF follows from results due to Gibbons-Manton \cite{gibman} and Bielawski \cite{biegibman}. 
Table \ref{bookie} gives a compilation of the total squared curvature of these important  spaces.

Of course, several  of the 
gravitational instantons 
 we have discussed do not  appear on Table \ref{bookie};
 the Taub-NUT  space $\mathbb{TN}$ and the Atiyah-Hitchin manifolds $\mathbb{AH}$ and 
 $\widetilde{\mathbb{AH}}$ are nowhere to be found. Of course, one might decree \cite{cherhit}
 that  $\mathbb{TN}$ is the ALF entry across from the fictitious diagram $A_0$, 
 or that   $\mathbb{AH}$ is the ALF  entry associated with the make-believe  
  diagram $D_0$; but to us, this is less interesting than the general point that 
   that $\int |\mathcal{R}|^2d\mu=
  8\pi^2 \chi$ for complete Ricci-flat $4$-manifolds with ALF asymptotics.

There is a large realm of gravitational instantons  which 
have  slower volume growth, but still have finite topological type. 
Cherkis \cite{cheralh} has proposed sorting these into two classes: the ALG
spaces, with at least quadratic volume growth, and the ALH instantons, 
with sub-quadratic volume growth. For example, 
Tian and Yau \cite{tyhedge} constructed 
 hyper-K\"ahler metrics on the complement of an anti-canonical divisor on any 
 del Pezzo surface; these metrics have volume growth $\sim \radius^{4/3}$, and so 
 are of ALH type. For these examples, one has enough control at infinity
 to see that the boundary term in  \eqref{gbound}  becomes negligeable at large radii, so that 
 the  pattern  $\int |\mathcal{R}|^2d\mu=
  8\pi^2 \chi$ 
continues to  hold. Presumably, this pattern will also turn out to hold for 
all gravitational instantons of type ALG and ALH.  

Many  gravitational instantons do seem to arise   as
 limits of edge-cone Einstein metrics. For instance, 
 the results of \cite{jmredge} strongly suggest that the ALH 
 examples  of  Tian-Yau are 
  $\beta\to 0$
 limits of K\"ahler-Einstein edge-cone metrics;
 proving this, however,  would entail first establishing 
  a  suitable lower bound for the  $K$-energy. It would obviously be
   interesting to prove the existence of 
  sequences of Einstein edge-cone metrics  which tend to other  known examples of gravitational 
  instantons.  
  On the other hand, one might hope  to construct new examples of 
  gravitational instantons as 
   Gromov-Hausdorff limits of suitable sequences of 
  $4$-dimensional Einstein edge-cone manifolds. While some features of  
   weak convergence for smooth Einstein metrics
   \cite{andprior,cheeti} may  carry over 
  with little change, it  seems  likely that the introduction of  
   edge-cone singularities may involve some serious 
   technical difficulties. Our hope is that    the regime of 
   small $\beta$  will  nonetheless prove to 
  be     manageable,    and that  the theory that  emerges will lead to 
   new insights concerning 
   precisely  which  gravitational instantons arise  as $\beta\to 0$ limits of Einstein spaces
  with edge-cone singularities.

     \begin{table}
\begin{center}
  \begin{tabular}{|l | c |c ||  c | c | }
    \hline
     \multicolumn{3}{|c||}{ Group at Infinity}&
        \multicolumn{2}{|c|}{ $\int |\mathcal{R}|^2d\mu$}\\
        \hline
        Dynkin Diagram&
$\Gamma\subset \mathbf{SU}(2)$   & $|\Gamma|$  & ALE & ALF \\ \hline
    $A_k$ \hspace{.7cm}
\begin{minipage}[c]{0.4in}
\setlength{\unitlength}{1ex}
\begin{picture}(50,5)(0,-2.5)
\put(-2,0){\circle*{1}}
\put(0, -0.1){$\ldots$}
\put(4, 0){\circle*{1}}
\put(6, 0){\circle*{1}}
\put(8, 0){\circle*{1}}
\put(-2,0){\line(1,0){1.5}}
\put(3,0){\line(1,0){1}}
\put(4,0){\line(1,0){2}}
\put(6,0){\line(1,0){2}}
\end{picture}
\end{minipage} &cyclic & $k+1$ & $8\pi^2 \left(k+1 -\frac{1}{k+1}\right)$ & $8\pi^2 (k+1)$ \\ \hline
    $D_k$ \hspace{.7cm}
\begin{minipage}[c]{0.4in}
\setlength{\unitlength}{1ex}
\begin{picture}(50,5)(0,-2)
\put(-2,0){\circle*{1}}
\put(0, -0.05){$\ldots$}
\put(4, 0){\circle*{1}}
\put(6, 0){\circle*{1}}
\put(8, 0){\circle*{1}}
\put(-2,0){\line(1,0){1.5}}
\put(3,0){\line(1,0){1}}
\put(4,0){\line(1,0){2}}
\put(6,0){\line(1,0){2}}
\put(6,0){\line(0,1){2}}
\put(6, 2){\circle*{1}}
\end{picture} 
\end{minipage}
&dihedral$^*$&$4k-8$  &$8\pi^2\left(k+1 -\frac{1}{4k-8}\right)$  & $8\pi^2 (k+1)$ \\
    \hline
      $E_6$  \hspace{.7cm}
\begin{minipage}[c]{0.4in}
\setlength{\unitlength}{1ex}
\begin{picture}(50,5)(0,-2)
\put(0,0){\circle*{1}}
\put(4, 2){\circle*{1}}
\put(2, 0){\circle*{1}}
\put(4, 0){\circle*{1}}
\put(6, 0){\circle*{1}}
\put(8, 0){\circle*{1}}
\put(0,0){\line(1,0){2}}
\put(2,0){\line(1,0){2}}
\put(4,0){\line(1,0){2}}
\put(6,0){\line(1,0){2}}
\put(4,0){\line(0,1){2}}
\end{picture} 
\end{minipage}
&tetrahedral$^*$ &$24$  &$8\pi^2\left(7 -\frac{1}{24}\right)$  & --- \\
       $E_7$ \hspace{.7cm}
\begin{minipage}[c]{0.4in}
\setlength{\unitlength}{1ex}
\begin{picture}(50,5)(0,-2)
\put(-2,0){\circle*{1}}
\put(0,0){\circle*{1}}
\put(4, 2){\circle*{1}}
\put(2, 0){\circle*{1}}
\put(4, 0){\circle*{1}}
\put(6, 0){\circle*{1}}
\put(8, 0){\circle*{1}}
\put(-2,0){\line(1,0){2}}
\put(0,0){\line(1,0){2}}
\put(2,0){\line(1,0){2}}
\put(4,0){\line(1,0){2}}
\put(6,0){\line(1,0){2}}
\put(4,0){\line(0,1){2}}
\end{picture} 
\end{minipage} &octohedral$^*$ & $48$ & $8\pi^2\left(8 -\frac{1}{48}\right)$  & --- \\
      $E_8$ \hspace{.7cm}
\begin{minipage}[c]{0.4in}
\setlength{\unitlength}{1ex}
\begin{picture}(50,5)(0,-2)
\put(-4,0){\circle*{1}}
\put(-2,0){\circle*{1}}
\put(0,0){\circle*{1}}
\put(4, 2){\circle*{1}}
\put(2, 0){\circle*{1}}
\put(4, 0){\circle*{1}}
\put(6, 0){\circle*{1}}
\put(8, 0){\circle*{1}}
\put(-4,0){\line(1,0){2}}
\put(-2,0){\line(1,0){2}}
\put(0,0){\line(1,0){2}}
\put(2,0){\line(1,0){2}}
\put(4,0){\line(1,0){2}}
\put(6,0){\line(1,0){2}}
\put(4,0){\line(0,1){2}}
\end{picture} 
\end{minipage}
&dodecahedral$^*$ & $120$ & $8\pi^2\left(9 -\frac{1}{120}\right)$  & --- \\
    \hline
  \end{tabular}
\end{center}
\caption{Total Squared Curvature of  ALE  \&  ALF Gravitational Instantons
   \label{bookie}}
\end{table}


  \pagebreak 

\appendix
\setcounter{section}{0}
\section{Appendix}
\label{apps}

The proof outlined in \S \ref{argument} involves the use of  the signature theorem for orbifolds.  We
will now
 indicate how this can be deduced  from the theory of transversally elliptic operators
 developed in \cite{atgroup}. For further details, see \cite{kawasaki}. 
 
 Let $X$ be a compact manifold equipped with the action of a compact Lie group $\mathbf{G}$. 
A differential operator $D$ between vector bundles on  $X$ is said to be {\em transversally elliptic} if it is  $\mathbf{G}$-invariant,  and its  restriction to any  submanifold transverse to a $\mathbf{G}$-orbit is elliptic.
 A trivial but important example arises when $\mathbf{G}$ acts freely on $X$, so that $X/\mathbf{G}$ is itself a manifold; in this case,  a transversally elliptic operator is essentially just 
  an elliptic operator on $X/\mathbf{G}$.  For us, the case of primary interest occurs when the action
  of $\mathbf{G}$ has only finite isotropy groups. In this case,  $X/\mathbf{G}$ is an orbifold. 

The  index of such a transversally elliptic operator $D$ 
 is an invariant distribution on $\mathbf{G}$.  Equivalently,   the index is given by an infinite series
\begin{equation} \label{at1} 
\ind 
(D)=\sum_\lambda a_\rho \chi_\rho
\end{equation}
 where $\rho$ runs over the irreducible representations of $\mathbf{G}$, $\chi_\rho$ is the character of $\rho$ and the multiplicities $a_\rho$ do not grow too fast.
 If $D$ is fully elliptic, then the sum in (\ref{at1})  is finite. In general, however,  it may be infinite;   for example, if $X=\mathbf{G}$ is equipped with the left action of $\mathbf{G}$ on itself,  and if $D$ is the zero operator, then $a_\lambda=\dim \chi_\lambda$ and (\ref{at1})
   just becomes the Peter-Weyl decomposition of $L^2(\mathbf{G})$.

\medskip

\noindent A general procedure for computing  the index of 
$D$ is described  in \cite{atgroup}.  Some of the key points are:

\begin{enumerate}[(i)]
\item
The distribution 
 $\ind (D)$ only depends \cite[Theorem (2.6)]{atgroup}
 on the $K$-theory class of the symbol of $D$ in the group $K_\mathbf{G}(T^*_\mathbf{G} X)$,  where $T^*_\mathbf{G} X$ is the subspace of the cotangent bundle $T^*X$ annihilated by the Lie algebra of $\mathbf{G}$ (i.e. ``transverse" to the $\mathbf{G}$-orbits). 
\item
The distribution
$\ind (D)$ is   supported \cite[Theorem (4.6)]{atgroup}
by the conjugacy classes of all elements of $\mathbf{G}$ that have fixed points in $X$.  
\item
Furthermore, 
 $\ind (D)$ is covariant \cite[Theorems (4.1) and (4.3)]{atgroup}
with respect to embeddings $\mathbf{G}\hookrightarrow \mathbf{H}$ and $X\hookrightarrow Z$.
\item \label{A5} 
For a connected Lie group $\mathbf{G}$,
 computation of $\ind (D)$  can be reduced \cite[Theorem (4.2)]{atgroup}
to the case of a maximal torus $\mathbf{T}$, replacing $X$ by $X \times \mathbf{G}/\mathbf{T}$.
\item \label{A6} 
When $\mathbf{G}$ has only finite isotropy groups, the transverse signature operator is transversally elliptic.  Its index equals  the signature \cite[Theorem (10.3)]{atgroup} 
of the rational homology manifold $X/\mathbf{G}$.  
\item
\label{A7}
If, in (\ref{A6}),  $\mathbf{G}$ is a torus,
 then the signature of $X/\mathbf{G}$ can be expressed in cohomological terms involving the (finitely many) fixed point sets $X^{\zap g}$ for ${\zap g}\in \mathbf{T}$.  This formula coincides with the Lefschetz Theorem formula (3.9) of \cite{indexiii}.
\item  
Using (\ref{A5}), the signature formula (\ref{A7}) extends to all $\mathbf{G}$ by using standard results about the flag manifold $\mathbf{G}/\mathbf{T}$.
\end{enumerate}

\noindent In the special case when $\dim X=4$,  the signature formula for an orbifold is just equation (\ref{attwo}), which is what we need in  this paper.  However the general theory also applies to the higher-dimensional case,  when $\dim X=4k$.

\medskip

\noindent It remains to explain how the signature theorem for an (oriented) orbifold $M$ can be derived from this theory of transversally elliptic operators.  In fact, we only need the special case when $M=X/\mathbf{G}$ is the quotient of a manifold by a compact group $\mathbf{G}$ acting with only finite isotropy groups.  The key observation (not widely known, but used already in \cite{kawasaki}) is that the oriented  frame bundle of any oriented 
orbifold is a smooth manifold $P$;   we can thus just take $X=P$ and 
$\mathbf{G}=\mathbf{SO}(n)$.  We just need to to verify that $P$ is a manifold,
and that $\mathbf{SO}(n)$ acts on it with only finite isotropy. 
 This problem is essentially local, we may
  assume that $M=\Gamma\backslash U$, where the finite group 
   $\Gamma \subset \mathbf{SO}(n)$ acts on  $U\approx \RR^n$ by left multiplication;
  moreover, there is  a trivialization 
 $P(U)\cong U\times \mathbf{SO}(n)$ such a manner that
  $\Gamma$ acts freely on $\mathbf{SO}(n)$ on the left, thus commuting with the natural 
  right action by $\mathbf{SO}(n)$.  We now see that  $P(M)= \Gamma \backslash P(U)$
  is therefore a manifold, and that 
  $M= P(M)/ \mathbf{SO}(n)$. Furthermore, 
   every  isotropy group of the 
  $\mathbf{SO}(n)$ action  on $P(M)$ 
   is now a subgroup
 of some isotropy group for the right action of $\mathbf{SO}(n)$ on 
 $\Gamma \backslash\mathbf{SO}(n)$,
 and so is conjugate  to a subgroup of $\Gamma$.

\bigskip

Much of our discussion generalizes nicely to higher dimensions.
The $\mathbf{G}$-signature theorem \cite[(6.12)]{indexiii}
gives an explicit cohomological formula for the signature 
$\tau ({\zap g},X)$, for any ${\zap g}\in \mathbf{G}$, in terms of its fixed point set $X^{\zap g}$. When $Y=X^{\zap g}$ is of codimension $2$ with 
normal cone angle $\beta=2\pi/p$,  $p$ an  integer,  the contribution of $Y$ becomes 
\begin{equation}
\label{at9}
\left\{2^{2k-1} \mathscr{L}(Y)  \coth (\dfrac{y+i\beta}{2})\right\} [Y]
\end{equation}
where $\mathscr{L}$ is the stable characteristic class given by
\begin{equation}
\mathscr{L}=\sum\mathscr{L}_r(p)=\prod \dfrac{x_i/2}{\tanh x_i/2}
\end{equation}
which is essentially the Hirzebruch $L$-series with $x/2$ for $x$.
Here, $y$ denotes the first Chern class of the normal bundle of $Y\subset X$.

\medskip

\noindent The ``defect'' contribution of $Y$ to the signature
$\tau (X)$ is given by replacing $\beta$ by $r\beta$, $1\leq r <p$ in (\ref{at9}), 
 summing over $r$ and dividing by $p$. This gives an explicit polynomial in $y$ and the Pontrjagin classes of $Y$, depending on $\beta$.  In the  $k=1$ case, where $\dim X=4$, we recover the formula (\ref{attwo}).

\medskip

\noindent Finally, one can extend this formula to all $\beta > 0$ , and thereby
 compute the signature defect due to any edge-cone singularity along $Y$.
 To do this,  we proceed as before, first obtaining a formula for
 rational values of $\beta=q/p$, and then extending it to all real values by continuity.

\pagebreak

\vfill 
\noindent 
{\bf Acknowledgements.} The authors would both particularly 
like to express their gratitude 
to the Simons Center for 
Geometry and Physics for its  hospitality    during the Fall semester of 2011, 
during which time this project began to take shape. 
They would also like to thank  fellow Simons Center visitors Nigel Hitchin and 
Sergey Cherkis for many useful discussions of gravitational instantons and related topics. 
The second author would also like to thank Rafe Mazzeo and Xiuxiong Chen for a number of
helpful suggestions.

\end{document}